\newtheorem{lemma}{Lemma}
\newtheorem{thm}{Theorem}
\newtheorem{Rem}{Remark}
\begin{document}
\begin{frontmatter}
\title{Novel numerical analysis of multi-term time fractional viscoelastic non-Newtonian fluid models for simulating unsteady MHD Couette flow of a generalized Oldroyd-B fluid}
\author[els]{Libo~Feng}
\ead{fenglibo2012@126.com}
\author[els]{Fawang Liu\corref{cor1}}
\ead{f.liu@qut.edu.au}\cortext[cor1]{Corresponding author.}
\author[els,qv]{Ian Turner}
\ead{i.turner@qut.edu.au}
\author[rvt]{Liancun Zheng}
\ead{liancunzheng@ustb.edu.cn}
\address[els]{School of Mathematical Sciences, Queensland University of
Technology,  GPO Box 2434, Brisbane, Qld 4001, Australia}
\address[qv]{Australian Research Council Centre of Excellence for Mathematical and Statistical Frontiers (ACEMS), Queensland University of Technology (QUT), Brisbane, Australia}
\address[rvt]{School of Mathematics and Physics, University of Science and Technology Beijing, Beijing 100083, China}
\begin{abstract}
In recent years, non-Newtonian fluids have received much attention due to their numerous applications, such as plastic manufacture and extrusion of polymer fluids. They are more complex than Newtonian fluids because the relationship between shear stress and shear rate is nonlinear. One particular subclass of non-Newtonian fluids is the generalized Oldroyd-B fluid, which is modelled using terms involving multi-term time fractional diffusion and reaction. In this paper, we consider the application of the finite difference method for this class of novel multi-term time fractional viscoelastic non-Newtonian fluid models. An important contribution of the work is that the new model not only has a multi-term time derivative, of which the fractional order indices range from 0 to 2, but also possesses a special time fractional operator on the spatial derivative that is challenging to approximate. There appears to be no literature reported on the numerical solution of this type of equation. We derive two new different finite difference schemes to approximate the model. Then we establish the stability and convergence analysis of these schemes based on the discrete $H^1$ norm and prove that their accuracy is of $O(\tau+h^2)$ and $O(\tau^{\min\{3-\gamma_s,2-\alpha_q,2-\beta\}}+h^2)$, respectively. Finally, we verify our methods using two numerical examples and apply the schemes to simulate an unsteady magnetohydrodynamic (MHD) Couette flow of a generalized Oldroyd-B fluid model. Our methods are effective and can be extended to solve other non-Newtonian fluid models such as the generalized Maxwell fluid model, the generalized second grade fluid model and the generalized Burgers fluid model.
\end{abstract}
\begin{keyword}
multi-term time derivative \sep finite difference method \sep  fractional non-Newtonian fluids \sep generalized Oldroyd-B fluid  \sep  Couette flow \sep  stability and convergence analysis
\end{keyword}

\end{frontmatter}
\section{Introduction}

Generally, a constitutive equation is used to specify the rheological properties of a material, which is a relation between the stress and the local properties of the fluid. Some common fluids, such as water, oil, air, ethanol and benzene, exhibit a linear relationship between the stress tensor and the rate of deformation tensor, which are called Newtonian fluids.  The Newtonian constitutive equation is the simplest linear viscoelastic model. For small deformations, low stress, low rate, and linear materials, linear viscoelasticity is usually applicable. However, some fluids produced industrially do not obey the Newtonian postulate, such as molten plastics, slurries, emulsions, pulps, and these are termed as non-Newtonian fluids. This means that the rapport between the stress tensor and the rate of deformation tensor is not linear but is non-linear. In reality about 90\% of fluids are nonlinear with large deformations, therefore nonlinear viscoelastic mathematical models are needed. Research related to non-Newtonian fluid mechanics is of great realistic significance to industry. Since a rheometer can not provide the necessary information of important rheological properties, the constitutive equations are the best available tools for understanding the complex behaviour of a material. Due to the nonlinear relationship between stress and deformation and there being no standard form universally valid for each non-Newtonian fluid, the constitutive equation of non-Newtonian fluids is much more complex than its Newtonian counterpart. The constitutive equations involving fractional calculus have proved to be a valuable tool for handling viscoelastic properties \cite{Bagley1,Friedrich1} and some results are obtained that are in good agreement with experimental data \cite{Makris,Hern}.

One particular subclass of non-Newtonian fluids is the generalized Oldroyd-B fluid, which has been found to approximate the response of many dilute polymeric liquids. Consider the flow of an incompressible Olyroyd-B fluid bounded by two infinite parallel rigid plates. Initially, the whole system is at rest and the upper plate is fixed. Then at time $t=0^+$, the lower plate starts to move with some acceleration. Due to the shear effects, the fluid over the plate is gradually disturbed.
The fundamental equations of an incompressible fluid are
\begin{align*}
\text{div}\, \mathbf{V}=0,\quad \rho \frac{\text{d}\mathbf{V}}{\text{dt}}=\text{div}\,\mathbf{T},
\end{align*}
where $\text{div}$ is the divergence operator, $\rho$ is the density of the fluid, $\mathbf{T}$ is the Cauchy stress tensor and $\frac{\text{d}}{\text{d}t}$ is the material time derivative. The constitutive equation for a generalized Oldroyd-B fluid is defined as \cite{Hilfer}:
\begin{align*}
\mathbf{T}=-p\mathbf{I}+\mathbf{S}, \quad
\bigg( 1+\lambda \frac{\text{D}^\alpha}{\text{Dt}^\alpha}\bigg)\mathbf{S}=\mu\bigg(1+\theta \frac{\text{D}^\beta}{\text{Dt}^\beta} \bigg)\mathbf{A},
\end{align*}
where $p$ is the pressure, $\mathbf{I}$ is the identity tensor, $\mathbf{S}$ is the extra-stress tensor, $\lambda$ is the relaxation time, $\mu$ is the dynamic viscosity coefficient of the fluid, $\theta$ is the retardation time, and $\mathbf{A}=\mathbf{L}+\mathbf{L}^T$ ($\mathbf{L}=\nabla \mathbf{V}$) denotes the first Rivlin-Ericksen tensor. The operators $\frac{\text{D}^\alpha}{\text{Dt}^\alpha}$ and $\frac{\text{D}^\beta}{\text{Dt}^\beta}$
are material derivatives and can be expressed as
\begin{align*}
\frac{\text{D}^\alpha \mathbf{S}}{\text{Dt}^\alpha}&=D_t^\alpha \mathbf{S}+(\mathbf{V}\cdot \nabla)\mathbf{S}-\mathbf{L}\mathbf{S}-\mathbf{S}\mathbf{L}^T,\\
\frac{\text{D}^\beta \mathbf{A}}{\text{Dt}^\beta}&=D_t^\beta \mathbf{A}+(\mathbf{V}\cdot \nabla)\mathbf{A}-\mathbf{L}\mathbf{A}-\mathbf{A}\mathbf{L}^T,
\end{align*}
where $D_t^\alpha$ and $D_t^\beta$ are the time fractional derivative operators of order $\alpha$ and $\beta$, respectively.
Assume that the velocity field and stress has the form
\begin{align*}
\textbf{V}=u(y,t) \textbf{i}, \quad \mathbf{S}=\mathbf{S}(y,t).
\end{align*}
Taking into account the initial condition $\mathbf{S}(y,0)=0$ and in absence of the pressure gradient, one can obtain the following equation with fractional derivative of the velocity of the main flow \cite{Qi07,Khan09}
\begin{align}\label{e01}
(1+\lambda D_t^\alpha)\frac{\partial u(y,t)}{\partial t}=\nu(1+\theta D_t^\beta)\frac{\partial^2u(y,t)}{\partial y^2},
\end{align}
where $\nu=\frac{\mu}{\rho}$. When a magnetic field is imposed on the above flow under the assumption of low magnetic Reynolds number, the following velocity equation can be derived \cite{a12,Zheng12}
\begin{align}\label{e02}
(1+\lambda D_t^\alpha)\frac{\partial u(y,t)}{\partial t}=\nu(1+\theta D_t^\beta)\frac{\partial^2u(y,t)}{\partial y^2}-K(1+\lambda D_t^\alpha)u(y,t),
\end{align}
where $K=\frac{\sigma B_0^2}{\rho}$, $B_0$ is the magnetic intensity and $\sigma$ is the electrical conductivity. When the fluid medium is porous, the following magnetohydrodynamic (MHD) flow of a generalized Oldroyd-B fluid with an effect of Hall current can be obtained \cite{Khan06}
\begin{align}\label{e03}
(1+\lambda D_t^\alpha)\frac{\partial u(y,t)}{\partial t}=\nu(1+\theta D_t^\beta)\frac{\partial^2u(y,t)}{\partial y^2}-
\frac{\nu\varphi_1}{k}(1+\theta D_t^\beta)u(y,t)-\frac{\sigma B_0^2}{\rho(1-i\phi)}(1+\lambda D_t^\alpha)u(y,t),
\end{align}
where $k$ is the permeability of the porous medium, $\varphi_1$ is the porosity of the medium, and $\phi$ is the Hall parameter. As  Eqs.(\ref{e01})-(\ref{e03}) contain similar terms, they can be expressed in a generalised form.

In this paper, we will consider the following novel multi-term time fractional non-Newtonian diffusion equation:
\begin{align}
&\sum_{j=1}^{s}b_j\,{D}_t^{\gamma_j}u(x,t)+a_1\frac{\partial u(x,t)}{\partial {t}}+\sum_{l=1}^{q}c_l\,{D}_t^{\alpha_l}u(x,t)+a_2u(x,t)\nonumber\\\label{e04}
=&~a_3\frac{\partial^2 u(x,t)}{\partial {x^2}}+a_4{D_t^\beta}\frac{\partial^2 u(x,t)}{\partial {x^2}}+f(x,t),~(x,t)\in\Omega,
\end{align}
subject to the initial conditions
\begin{equation}\label{e05}
u(x,0)=\phi_1(x),\quad u_t(x,0)=\phi_2(x),\quad 0\le x\le L,
\end{equation}
and the boundary conditions:
\begin{equation}\label{e06}
u(0,t)=0,\quad u(L,t)=0,\quad 0\le t\le T,
\end{equation}
where $a_i>0,~i=1,2,3,4$, $b_j\geq0,~j=1,2,\ldots,s$, $c_l\geq0,~l=1,2,\ldots,q$, $1<\gamma_1<\gamma_2<\ldots<\gamma_s<2$, $0<\alpha_1<\alpha_2<\ldots<\alpha_q<1$ and $\Omega=(0,L)\times(0,T]$.  The Caputo time fractional derivative ${D_t^\beta} u(x,t)$ ($0<\beta<1$) and ${D_t^\gamma} u(x,t)$ ($1<\gamma<2$) are given by \cite{Pod99,Liu15b}
\begin{align*}
{D_t^\beta} u(x,t)&=\frac{1}{\Gamma (1-\beta)}\int_{0}^{t}  (t-s)^{-\beta} \frac{\partial
{u(x,s)}}{\partial {s}}d s,~0<\beta<1,\\
{D_t^\gamma} u(x,t)&=\frac{1}{\Gamma (2-\gamma)}\int_{0}^{t}  (t-s)^{1-\gamma} \frac{\partial^2
{u(x,s)}}{\partial {s^2}}d s,~1<\gamma<2.
\end{align*}
The general multi-term time fractional diffusion equation only contains the multi-term time fractional derivative terms without the special term ${D_t^\beta}\frac{\partial^2 u}{\partial {x^2}}$. Its solution has been investigated both theoretically and numerically. Some authors used the method of separating variables to obtain analytical solutions of the multi-term time fractional diffusion-wave equations and the multi-term time fractional diffusion equation \cite{Gejji,Luchko,Jiang1}. Numerical solutions for the multi-term time fractional diffusion-wave equation can be found in \cite{Dehghan,Ye15} and for the multi-term time fractional diffusion equation can be found in \cite{Qin17,Jin,Zheng}, respectively. There is also some research on the numerical solution of the multi-term time fractional diffusion equation, of which the indices belong to $(0,2)$ or greater than 2 \cite{Ford,Liu13}.

Different to the general multi-term time fractional diffusion equation, the new model (\ref{e04}) not only has a multi-term time derivative, of which the fractional order indices are from 0 to 2,  but also possesses a special time fractional operator on the spatial derivative, which is challenging to approximate. Although there is some literature \cite{a12,Zheng12,Ming16} involving the exact solution of the generalized Oldroyd-B fluid, the solution is typically given in series form with special functions, such as the Fox $H$-function or the multivariate Mittag-Leffler function, and both of these functions are difficult to express explicitly. Therefore, numerical solution of (\ref{e04}) is a promising tool to provide insight on the behaviour of the model. In \cite{Baz}, Bazhlekova and Bazhlekov presented a finite difference method to solve the viscoelastic flow of a generalized Oldroyd-B fluid (\ref{e01}). They utilised the Gr\"unwald-Letnikov formula to approximate the Riemann-Liouville time fractional derivative, which has first order accuracy, however no theoretical analysis was given. Recently, Feng et al. \cite{Feng17} proposed a finite difference method for the generalized fractional Oldroyd-B fluid (\ref{e01}) between two rigid plates and gave the stability and convergence analysis, which has low order accuracy as well. To the best of the authors' knowledge, there is no literature reported on the numerical solution of Eqs.(\ref{e02}) and (\ref{e03}). Therefore, the numerical solution of Eq.(\ref{e04}) has also not appeared. For the two kinds of time fractional derivatives in the L.H.S. of Eq.(\ref{e04}), the so-called $L1$ or $L2$ scheme can be used for approximation. For the coupled operator (time fractional operator on the spatial derivative) in the R.H.S. of Eq.(\ref{e04}), few techniques can be applied. As Eq.(\ref{e04}) involves these terms simultaneously, the derivation of the numerical solution becomes difficult and it is more challenging to establish the theoretical analysis. The main contributions of this paper are as follows:

\begin{itemize}
\item We propose two new different finite difference schemes to approximate the coupled operator ${D_t^\beta}\frac{\partial^2 u}{\partial {x^2}}$, in which the mixed L scheme is used to discretise the equation at mesh point $(x_i,t_{n-\frac{1}{2}})$ directly. We also establish the $L2$ scheme for the term ${D_t^\gamma}u(x,t)$ with first order accuracy. In addition, we give an important and useful lemma, which can be extended to other multi-term time fractional diffusion problems;
\item We derive two different finite difference schemes for problem (\ref{e04}) with accuracy $O(\tau+h^2)$ and $O(\tau^{\min\{3-\gamma_{s},2-\alpha_{q},2-\beta\}}$ $+h^2)$, respectively and establish the stability and convergence analysis. We prove our method is unconditionally stable and convergent under the discrete $H^1$ norm;
\item Our numerical methods are robust and flexible, which can be used to deal with problem (\ref{e04}) with different initial and boundary conditions,  for which an analytical solution may be not feasible;
\item Our numerical solution is more general and can be used to solve other time fractional diffusion problems, such as the generalized Oldroyd-B fluid model with or without a magnetic field effect, the generalized Maxwell fluid model, the generalized second grade fluid model and the generalized Burgers' fluid model.
\end{itemize}

The outline of the paper is as follows. In Section 2, some preliminary knowledge is given, in which two numerical schemes to discretise the time fractional derivative are proposed. In Section 3, we develop the finite difference schemes for Eq.(\ref{e04}). We proceed with the proof of the stability and convergence of the scheme using the energy method and discuss the solvability of the numerical scheme in Section 4. In section 5, we present two numerical examples to demonstrate the effectiveness of our method and some conclusions are summarised.

\section{Preliminary knowledge}

For convenience, in the subsequent sections, we suppose that $C, C_1, C_2, \ldots$ are positive constants, whose values will be implicitly determined by the surrounding context.

Firstly, in the interval $[0,L]$, we take the mesh points $x_i=ih$,  $i = 0, 1,\cdots,M$, and $t_n = n\tau$, $n = 0, 1,\cdots, N$, where $h =L/M$, $ \tau= T/N$ are the uniform spatial step size and temporal step size, respectively. Denote $\Omega_{\tau}\equiv \{t_n|~0\leq n\leq N\}$ and $\Omega_{h}\equiv \{x_i|~0\leq i\leq M\}$. Define the grid function $u_i^n=u(x_i,t_n)$ and $f_i^n=f(x_i,t_n)$.  We introduce the following notations:
\begin{align*}
\nabla_tu_i^{n}=\frac{u_i^n-u_i^{n-1}}{\tau},\quad u_i^{n-\frac{1}{2}}=\frac{u_i^n+u_i^{n-1}}{2},\quad
\nabla_xu_i^{n}=\frac{u_i^n-u_{i-1}^{n}}{h},\quad\delta_x^2u_i^n=\frac{u_{i-1}^n-2u_i^n+u_{i+1}^n}{h^2}.
\end{align*}
Denote
\begin{align*}
\mathcal{V}_h=\{v~|~v~ \text{is~a~grid~function~on}~\Omega_{h}~\text{and}~v_0=v_M=0\}.
\end{align*}
For any $\chi,v\in \mathcal{V}_h$, we define the following discrete inner products and induced norms:
\begin{align*}
&(\chi,v)=h\sum_{i=1}^{M-1}\chi_iv_i,\quad \langle\nabla_x\chi,\nabla_xv\rangle=h\sum_{i=1}^{M}\nabla_x\chi_i\cdot\nabla_xv_i,\\
&||v||_0=\sqrt{(v,v)},\quad  ||v||_{\infty}=\max\limits_{0\leq i\leq M} |v_i|,\\
&|v|_1=\sqrt{\langle\nabla_xv,\nabla_xv\rangle}, \quad ||v||_1=\sqrt{a_2||v||_0^2+a_3|v|_1^2}.
\end{align*}
It is straightforward to check that
\begin{align}\label{e07}
(\delta_x^2v^k,v^n)&=-\langle\nabla_xv^k,\nabla_xv^n\rangle,\\\label{e08}
(\delta_x^2v^k,\nabla_tv^n)&=-\frac{1}{\tau}\langle\nabla_xv^k,\nabla_xv^n-\nabla_xv^{n-1}\rangle
=-\langle\nabla_xv^k,\nabla_t(\nabla_xv^n)\rangle.
\end{align}
To discretise the time fractional derivative ${D_t^\gamma}u(x,t)$ $(1<\gamma<2)$ at $(x_i,t_n)$, we have
\begin{align*}
{D_t^\gamma} u(x_i,t_n)=&\frac{1}{\Gamma (2-\gamma)}\int_{0}^{t_n}  (t_n-s)^{1-\gamma} \frac{\partial^2
{u(x_i,s)}}{\partial {s^2}}d s\\
=&\frac{1}{\Gamma (2-\gamma)}\sum_{k=1}^{n}\int_{t_{k-1}}^{t_k}  (t_n-s)^{1-\gamma} \frac{\partial^2
{u(x_i,s)}}{\partial {s^2}}d s\\
=&\frac{1}{\Gamma (2-\gamma)}\sum_{k=1}^{n}\int_{t_{k-1}}^{t_k}  (t_n-s)^{1-\gamma}\cdot\frac{u_i^k-2u_i^{k-1}+u_i^{k-2}}{\tau^2} d s+r^n\\
=&\frac{\tau^{1-\gamma}}{\Gamma(3-\gamma)}\Big[a_0^{(\gamma)}\nabla_tu_i^{n}
-\sum_{k=1}^{n-1}(a_{n-k-1}^{(\gamma)}-a_{n-k}^{(\gamma)})\nabla_tu_i^{k}-
a_{n-1}^{(\gamma)}\frac{\partial u(x_i,0)}{\partial t}\Big]+r^n,
\end{align*}
where $u_i^{-1}=u_i^0-\tau\frac{\partial u(x_i,0)}{\partial t}$ and $a_k^{(\gamma)}=(k+1)^{2-\gamma}-k^{2-\gamma}$, $k=0,1,2,\ldots$ For the truncation error $r^n$, we have \cite{Li11}
\begin{align*}
|r^n|\leq \frac{CT^{2-\gamma}}{\Gamma(3-\gamma)}\max\limits_{0\leq t\leq T}\bigg|\frac{\partial^3{u(x_i,t)}}{\partial t^3} \bigg|\cdot \tau+O(\tau^2).
\end{align*}
Then we can obtain the discrete scheme for the time fractional derivative ${D_t^\gamma}u(x,t)$ at mesh points $(x_i,t_n)$
\begin{align}\label{e10}
{D_t^\gamma}u(x_i,t_{n})=\frac{\tau^{1-\gamma}}{\Gamma(3-\gamma)}\Big[a_0^{(\gamma)}\nabla_tu_i^{n}
-\sum_{k=1}^{n-1}(a_{n-k-1}^{(\gamma)}-a_{n-k}^{(\gamma)})\nabla_tu_i^{k}-
a_{n-1}^{(\gamma)}\frac{\partial u(x_i,0)}{\partial t}\Big]+R_1,
\end{align}
where $|R_1|\leq C\tau$.

To discretise the time fractional derivative ${D_t^\gamma}u(x,t)$ $(1<\gamma<2)$  at $(x_i,t_{n-\frac{1}{2}})$, we have the following so-called $L2$ formula \cite{Sun06}
\begin{align}\label{e11}
{D_t^\gamma}u(x_i,t_{n-\frac{1}{2}})=\frac{\tau^{1-\gamma}}{\Gamma(3-\gamma)}
\Big[a_0^{(\gamma)}\nabla_tu_i^{n}
-\sum_{k=1}^{n-1}(a_{n-k-1}^{(\gamma)}-a_{n-k}^{(\gamma)})\nabla_tu_i^{k}-
a_{n-1}^{(\gamma)}\frac{\partial u(x_i,0)}{\partial t}\Big]+R_2,
\end{align}
where $|R_2|\leq C\tau^{3-\gamma}$.
\begin{lemma}\label{lm2}
For $1<\gamma<2$, define $a_k^{(\gamma)}=(k+1)^{2-\gamma}-k^{2-\gamma}$, $k=0,1,2,\ldots,n$ and vector $S=\{S_1,S_2,S_3,\ldots,S_N\}$ and constant $P$, then it holds that
\begin{align*}
&\frac{\tau^{1-\gamma}}{\Gamma(3-\gamma)}\sum_{n=1}^{N}\Big[a_0^{(\gamma)}S_n-
\sum_{k=1}^{n-1}(a_{n-k-1}^{(\gamma)}-a_{n-k}^{(\gamma)})S_k-
a_{n-1}^{(\gamma)}P\Big]S_n\\
\geq &\frac{T^{1-\gamma}}{2\Gamma(2-\gamma)}\sum_{n=1}^{N}S_n^2
-\frac{T^{2-\gamma}}{2\tau\Gamma(3-\gamma)}P^2,~N=1,2,3,\ldots
\end{align*}
\end{lemma}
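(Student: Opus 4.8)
The plan is to rewrite the bracketed quantity in a telescoping ``difference'' form and then apply the elementary inequality $(a-b)a\ge\tfrac12 a^2-\tfrac12 b^2$. For brevity set $g_k=\frac{\tau^{1-\gamma}}{\Gamma(3-\gamma)}a_k^{(\gamma)}$, so that the left-hand side of the lemma equals
\[
Q:=\sum_{n=1}^{N}\Big[g_0 S_n-\sum_{k=1}^{n-1}(g_{n-k-1}-g_{n-k})S_k-g_{n-1}P\Big]S_n.
\]
Since $2-\gamma\in(0,1)$, the function $x\mapsto x^{2-\gamma}$ is positive, increasing and concave on $[0,\infty)$; hence the $a_k^{(\gamma)}$, and therefore the $g_k$, are positive and non-increasing in $k$, with $g_{j-1}-g_j\ge0$ (equivalently $(j-1)^{2-\gamma}+(j+1)^{2-\gamma}\le 2j^{2-\gamma}$, which is exactly the concavity of $x^{2-\gamma}$). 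Using $\sum_{j=1}^{n-1}(g_{j-1}-g_j)=g_0-g_{n-1}$ I would replace $g_0 S_n$ inside the bracket by $\sum_{j=1}^{n-1}(g_{j-1}-g_j)S_n+g_{n-1}S_n$, which recasts the bracket at level $n$ as $\sum_{j=1}^{n-1}(g_{j-1}-g_j)(S_n-S_{n-j})+g_{n-1}(S_n-P)$.

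Next, since $g_{j-1}-g_j\ge0$ and $g_{n-1}>0$, I would apply $(S_n-S_{n-j})S_n\ge\tfrac12 S_n^2-\tfrac12 S_{n-j}^2$ and $(S_n-P)S_n\ge\tfrac12 S_n^2-\tfrac12 P^2$ termwise. Collecting the $S_n^2$ contributions and telescoping once more gives $\sum_{j=1}^{n-1}(g_{j-1}-g_j)\tfrac12 S_n^2+g_{n-1}\tfrac12 S_n^2=\tfrac12 g_0 S_n^2$, which leaves
\[
Q\ge\tfrac12 g_0\sum_{n=1}^{N}S_n^2-\tfrac12\sum_{n=1}^{N}\sum_{j=1}^{n-1}(g_{j-1}-g_j)S_{n-j}^2-\tfrac12 P^2\sum_{n=1}^{N}g_{n-1}.
\]
Interchanging the order of summation in the double sum via $m=n-j$ (so $1\le m\le N-1$ and, for fixed $m$, $j$ runs from $1$ to $N-m$), the inner coefficient telescopes to $g_0-g_{N-m}$, so the double sum equals $\sum_{m=1}^{N-1}(g_0-g_{N-m})S_m^2$. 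Substituting this back, the $g_0$ contributions cancel and one is left with
\[
Q\ge\tfrac12\sum_{m=1}^{N}g_{N-m}S_m^2-\tfrac12 P^2\sum_{n=1}^{N}g_{n-1}.
\]

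It then remains to estimate the two coefficients. Monotonicity of $g$ gives $g_{N-m}\ge g_{N-1}$ for every $1\le m\le N$, so $\tfrac12\sum_{m}g_{N-m}S_m^2\ge\tfrac12 g_{N-1}\sum_{n=1}^N S_n^2$; and by the mean value theorem $a_{N-1}^{(\gamma)}=N^{2-\gamma}-(N-1)^{2-\gamma}=(2-\gamma)\xi^{1-\gamma}\ge(2-\gamma)N^{1-\gamma}$ for some $\xi\in(N-1,N)$ (using $1-\gamma<0$), whence, with $\Gamma(3-\gamma)=(2-\gamma)\Gamma(2-\gamma)$,
\[
g_{N-1}\ge\frac{\tau^{1-\gamma}(2-\gamma)N^{1-\gamma}}{\Gamma(3-\gamma)}=\frac{(N\tau)^{1-\gamma}}{\Gamma(2-\gamma)}=\frac{T^{1-\gamma}}{\Gamma(2-\gamma)}.
\]
For the $P^2$ term, the telescoping identity $\sum_{k=0}^{N-1}a_k^{(\gamma)}=N^{2-\gamma}$ gives $\sum_{n=1}^{N}g_{n-1}=\frac{\tau^{1-\gamma}N^{2-\gamma}}{\Gamma(3-\gamma)}=\frac{T^{2-\gamma}}{\tau\Gamma(3-\gamma)}$. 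Inserting both estimates into the last displayed inequality yields precisely the claimed bound. I expect the only places needing care to be the two reindexings of the telescoping/double sums and the verification that $g_{j-1}-g_j\ge0$ from the concavity of $x^{2-\gamma}$; everything else is a direct computation.
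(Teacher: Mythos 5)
Your proof is correct, and every step checks out: the reindexing $j=n-k$ inside the bracket, the telescoping of $g_0$ into $\sum_{j=1}^{n-1}(g_{j-1}-g_j)+g_{n-1}$, the sign condition $g_{j-1}-g_j\ge 0$ from concavity of $x^{2-\gamma}$, the interchange $m=n-j$ in the double sum, and the two final estimates $g_{N-1}\ge T^{1-\gamma}/\Gamma(2-\gamma)$ (via the mean value theorem with $1-\gamma<0$) and $\sum_{n=1}^N g_{n-1}=T^{2-\gamma}/(\tau\Gamma(3-\gamma))$. The paper itself gives no proof here, only the citation ``See [Sun06]''; your argument is essentially a self-contained reconstruction of that standard Sun--Wu argument (decompose into nonnegative-weighted differences, apply $(a-b)a\ge\tfrac12 a^2-\tfrac12 b^2$ termwise, reindex and telescope), so it matches the intended route rather than departing from it.
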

\begin{proof}
See \cite{Sun06}.
\end{proof}
To discretise the time fractional derivative ${D_t^\beta}u(x,t)$ $(0<\beta<1)$ at $(x_i,t_{n})$, we have the following so-called $L1$ formula \cite{Sun06}
\begin{align}
{D_t^\beta}u(x_i,t_{n})&=\frac{\tau^{-\beta}}{\Gamma(2-\beta)}
\Big[d_0^{(\beta)}u_i^{n}
-\sum_{k=1}^{n-1}(d_{n-k-1}^{(\beta)}-d_{n-k}^{(\beta)}) u_i^{k}-
d_{n-1}^{(\beta)} u_i^{0}\Big]+R_3\nonumber\\\label{e12}
&=\frac{\tau^{1-\beta}}{\Gamma(2-\beta)}
\sum_{k=1}^{n}d_{n-k}^{(\beta)} \nabla_tu_i^{k}+R_3,
\end{align}
where $d_k^{(\beta)}=(k+1)^{1-\beta}-k^{1-\beta}$, $k=0,1,2,\ldots,n$ and $|R_3|\leq C\tau^{2-\beta}$. It is straightforward to derive the following lemma on the properties of $d_k^{(\beta)}$ \cite{Liu07}.
\begin{lemma}\label{lm3}
For $0<\beta<1$, define $d_k^{(\beta)}=(k+1)^{1-\beta}-k^{1-\beta}$, $k=0,1,2,\ldots$ then
\begin{itemize}
\item[1.] $d_k^{(\beta)}>0$, $d_0^{(\beta)}=1$, $d_k^{(\beta)}>d_{k+1}^{(\beta)}$, ${\lim\limits_{k\to \infty}}d_k^{(\beta)}=0$,
\item[2.] $\sum\limits_{k=0}^{n-1}(d_k^{(\beta)}-d_{k+1}^{(\beta)})+d_{n}^{(\beta)}=1$,
\item[3.] $d_{k+1}^{(\beta)}-2d_k^{(\beta)}+d_{k-1}^{(\beta)}\geq0$,~$k\geq 1$.
\end{itemize}
\end{lemma}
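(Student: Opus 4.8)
The plan is to reduce all three statements to elementary properties of the single function $f(x)=x^{1-\beta}$ on $(0,\infty)$. Since $0<\beta<1$, the exponent $\mu:=1-\beta$ lies in $(0,1)$, so $f$ is positive, strictly increasing and strictly concave, and its successive derivatives $f'(x)=\mu x^{\mu-1}$, $f''(x)=\mu(\mu-1)x^{\mu-2}$, $f'''(x)=\mu(\mu-1)(\mu-2)x^{\mu-3}$ have signs $+,-,+$ respectively on $(0,\infty)$. Writing $d_k^{(\beta)}=f(k+1)-f(k)$, each claim becomes a statement about finite differences of $f$, which I would handle with the mean value theorem, or equivalently with the integral representation $d_k^{(\beta)}=\int_k^{k+1}f'(x)\,dx$.

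For part 1: $d_0^{(\beta)}=f(1)-f(0)=1$ is immediate; positivity $d_k^{(\beta)}>0$ follows from strict monotonicity of $f$. The monotone decay $d_k^{(\beta)}>d_{k+1}^{(\beta)}$ follows from strict concavity: by the mean value theorem $d_k^{(\beta)}=f'(\xi_k)$ with $\xi_k\in(k,k+1)$, and since $f'$ is strictly decreasing the values $d_k^{(\beta)}$ strictly decrease in $k$. The same representation gives $d_k^{(\beta)}=\mu\,\xi_k^{\mu-1}$ with $\xi_k>k$, hence $0<d_k^{(\beta)}<\mu\,k^{\mu-1}\to 0$, so $\lim_{k\to\infty}d_k^{(\beta)}=0$. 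Part 2 is a pure telescoping identity: $\sum_{k=0}^{n-1}\bigl(d_k^{(\beta)}-d_{k+1}^{(\beta)}\bigr)=d_0^{(\beta)}-d_n^{(\beta)}$, and adding $d_n^{(\beta)}$ leaves $d_0^{(\beta)}=1$.

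Part 3 is the only step with any content, and I expect it to be the main obstacle, since it asserts convexity of the sequence $(d_k^{(\beta)})$. Expanding,
\begin{align*}
d_{k+1}^{(\beta)}-2d_k^{(\beta)}+d_{k-1}^{(\beta)}=f(k+2)-3f(k+1)+3f(k)-f(k-1),
\end{align*}
which is precisely the third forward difference $\Delta^3 f(k-1)$. By the mean value theorem for divided differences, $\Delta^3 f(k-1)=f'''(\xi)$ for some $\xi\in(k-1,k+2)$; since $k\ge 1$ we have $\xi>0$, and because $\mu(\mu-1)(\mu-2)>0$ for $\mu\in(0,1)$ it follows that $f'''(\xi)>0$, hence $d_{k+1}^{(\beta)}-2d_k^{(\beta)}+d_{k-1}^{(\beta)}>0\ge 0$. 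Alternatively, one may write this difference as $\int_k^{k+1}\bigl(f'(x+1)-2f'(x)+f'(x-1)\bigr)\,dx$ and observe that the integrand is nonnegative because $f'$ is convex on $(0,\infty)$ (its second derivative $f'''$ is positive there); the restriction $k\ge 1$ is exactly what keeps all arguments of $f$ (and $f'$) nonnegative.
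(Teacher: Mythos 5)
Your proof is correct. Note that the paper itself offers no argument for this lemma at all --- it simply asserts the properties are ``straightforward'' and cites Liu et al.\ (2007) --- so there is no in-paper proof to compare against; your reduction to the sign pattern of the derivatives of $f(x)=x^{1-\beta}$ is exactly the standard argument found in that literature. Parts 1 and 2 are handled cleanly (monotonicity and concavity of $f$, plus telescoping). For part 3, both of your routes work; the only point worth a moment's care is the case $k=1$, where the leftmost node of the third difference is $x=0$ and $f'''$ blows up there. The mean-value theorem for divided differences still applies because the Rolle-type argument only requires $f$ to be continuous on the closed interval $[k-1,k+2]$ and three times differentiable on the open interval, so the intermediate point $\xi$ lands in $(0,3)$ where $f'''(\xi)=\mu(\mu-1)(\mu-2)\xi^{\mu-3}>0$; likewise, in the integral version the argument $x-1$ stays in $(0,1)$ for $x\in(1,2)$, so convexity of $f'$ on $(0,\infty)$ is all that is needed. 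With that observation made explicit, the proof is complete and, if anything, more self-contained than what the paper provides.
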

Since
\begin{align*}
\frac{\partial^2u(x_i,t_n)}{\partial x^2}=\delta_x^2u_i^n-\frac{h^2}{12}\frac{\partial^4u(\xi_i,t_n)}{\partial x^4},
\end{align*}
then we have
\begin{align}
{D_t^\beta}\frac{\partial^2u(x_i,t_n)}{\partial x^2}
&=\frac{\tau^{-\beta}}{\Gamma(2-\beta)}\Big[d_0^{(\beta)}\delta_x^2u_i^n-
\sum_{k=1}^{n-1}(d_{n-k-1}^{(\beta)}-d_{n-k}^{(\beta)})\delta_x^2u_i^k-
d_{n-1}^{(\beta)}\delta_x^2 u_i^0\Big]+R_4\nonumber\\\label{e13}
&=\frac{\tau^{1-\beta}}{\Gamma(2-\beta)}
\sum_{k=1}^{n}d_{n-k}^{(\beta)} \nabla_t(\delta_x^2u_i^{k})+R_4,
\end{align}
where $|R_4|\leq C(\tau^{2-\beta}+h^2)$.
\begin{lemma}\label{lm4}
For $0<\beta<1$, it holds that
\begin{align*}
\frac{\tau^{1-\beta}}{\Gamma(2-\beta)}
\sum_{k=1}^{n}d_{n-k}^{(\beta)} \Big(\nabla_t(\delta_x^2u^{k}),\nabla_tu^n\Big)=-\frac{\tau^{1-\beta}}{\Gamma(2-\beta)}
\sum_{k=1}^{n}d_{n-k}^{(\beta)} \Big\langle\nabla_t(\nabla_xu^{k}),\nabla_t(\nabla_xu^{n})\Big\rangle.
\end{align*}
\end{lemma}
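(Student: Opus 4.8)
The plan is to reduce the asserted identity to a term-by-term application of the discrete Green's formula \eqref{e07} (equivalently \eqref{e08}), after exploiting the fact that the backward time difference $\nabla_t$ commutes with the spatial operators $\delta_x^2$ and $\nabla_x$.

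First I would record the commutation relations. Since $\nabla_t$ acts only on the temporal index while $\delta_x^2$ and $\nabla_x$ act only on the spatial index, for every $k$ we have
\begin{align*}
\nabla_t(\delta_x^2 u^k)=\delta_x^2(\nabla_t u^k),\qquad \nabla_x(\nabla_t u^k)=\nabla_t(\nabla_x u^k).
\end{align*}
Next, because the numerical solution inherits the homogeneous boundary conditions \eqref{e06}, each $u^k$ belongs to $\mathcal{V}_h$, and hence so does $\nabla_t u^k=(u^k-u^{k-1})/\tau$; in particular the discrete inner products $(\cdot,\cdot)$ and $\langle\cdot,\cdot\rangle$ appearing below are well defined.

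Then I would apply \eqref{e07} with the grid functions $\nabla_t u^k,\nabla_t u^n\in\mathcal{V}_h$ in place of $v^k,v^n$, and use the commutation relations from the first step:
\begin{align*}
\big(\nabla_t(\delta_x^2 u^{k}),\nabla_t u^n\big)
&=\big(\delta_x^2(\nabla_t u^{k}),\nabla_t u^n\big)
=-\big\langle\nabla_x(\nabla_t u^{k}),\nabla_x(\nabla_t u^n)\big\rangle\\
&=-\big\langle\nabla_t(\nabla_x u^{k}),\nabla_t(\nabla_x u^{n})\big\rangle.
\end{align*}
Finally, multiplying both sides by the (nonnegative) coefficient $\tfrac{\tau^{1-\beta}}{\Gamma(2-\beta)}\,d_{n-k}^{(\beta)}$ and summing over $k=1,\ldots,n$ — with $n$ fixed, so that the factor $\nabla_t(\nabla_x u^{n})$ stays inside the sum — yields exactly the claimed equality.

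There is essentially no analytical obstacle: the only points that deserve a word of care are (i) the commutation of $\nabla_t$ with $\delta_x^2$ and $\nabla_x$, which is immediate since these difference operators act on independent indices, and (ii) the membership $\nabla_t u^k\in\mathcal{V}_h$ required to invoke \eqref{e07}, which follows from the boundary conditions. Alternatively, one can avoid the commutation remark altogether by writing $\nabla_t(\delta_x^2 u^{k})=\tau^{-1}\big(\delta_x^2 u^{k}-\delta_x^2 u^{k-1}\big)$, applying \eqref{e08} to $(\delta_x^2 u^{k},\nabla_t u^n)$ and to $(\delta_x^2 u^{k-1},\nabla_t u^n)$ separately, and subtracting; both routes lead to the same conclusion.
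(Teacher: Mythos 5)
Your proof is correct, but it takes a more direct route than the paper's. The paper first unrolls the discrete convolution $\sum_{k=1}^{n}d_{n-k}^{(\beta)}\nabla_t(\delta_x^2u^k)$ into the weighted-difference form $d_0^{(\beta)}\delta_x^2u^n-\sum_{k=1}^{n-1}(d_{n-k-1}^{(\beta)}-d_{n-k}^{(\beta)})\delta_x^2u^k-d_{n-1}^{(\beta)}\delta_x^2u^0$ (the same rearrangement underlying \eqref{e12}--\eqref{e13}), applies \eqref{e08} to each resulting pairing $(\delta_x^2u^k,\nabla_tu^n)$, and then re-rolls the sum into convolution form. You instead commute $\nabla_t$ past $\delta_x^2$ and $\nabla_x$ and apply \eqref{e07} term by term to the pair $\nabla_tu^k,\nabla_tu^n\in\mathcal{V}_h$, which dispenses with the coefficient rearrangement entirely; your second suggested route (splitting $\nabla_t(\delta_x^2u^k)$ into two applications of \eqref{e08}) is the closest in spirit to the paper. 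Both arguments hinge on the same discrete integration by parts, so nothing essential is gained or lost, but your version is shorter and has the merit of making explicit the one hypothesis the paper leaves tacit, namely that the homogeneous boundary conditions place $\nabla_tu^k$ in $\mathcal{V}_h$ so that the discrete Green identity is applicable.
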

\begin{proof}
Combining (\ref{e08}) and (\ref{e13}), we obtain
\begin{align*}
&\frac{\tau^{1-\beta}}{\Gamma(2-\beta)}
\sum_{k=1}^{n}d_{n-k}^{(\beta)} \Big(\nabla_t(\delta_x^2u^{k}),\nabla_tu^n\Big)\\
=&\frac{\tau^{-\beta}}{\Gamma(2-\beta)}\Big[d_0^{(\beta)}\Big(\delta_x^2u^n,\nabla_tu^n\Big)-
\sum_{k=1}^{n-1}(d_{n-k-1}^{(\beta)}-d_{n-k}^{(\beta)})\Big(\delta_x^2u^k,\nabla_tu^n\Big)-
d_{n-1}^{(\beta)}\Big(\delta_x^2 u^0,\nabla_tu^n\Big)\Big]\\
=&-\frac{\tau^{-\beta}}{\Gamma(2-\beta)}\Big[d_0^{(\beta)}\Big\langle\nabla_xu^n,\nabla_t(\nabla_xv^n)\Big\rangle-
\sum_{k=1}^{n-1}(d_{n-k-1}^{(\beta)}-d_{n-k}^{(\beta)})\Big\langle\nabla_xu^k,\nabla_t(\nabla_xv^n)\Big\rangle-
d_{n-1}^{(\beta)}\Big\langle\nabla_xu^0,\nabla_t(\nabla_xv^n)\Big\rangle\Big]\\
=&-\frac{\tau^{-\beta}}{\Gamma(2-\beta)}\Big\langle d_0^{(\beta)}\nabla_xu^n-\sum_{k=1}^{n-1}(d_{n-k-1}^{(\beta)}-d_{n-k}^{(\beta)})
\nabla_xu^k-d_{n-1}^{(\beta)}\nabla_xu^0,\nabla_t(\nabla_xv^n)\Big\rangle\\
=&-\frac{\tau^{1-\beta}}{\Gamma(2-\beta)}
\sum_{k=1}^{n}d_{n-k}^{(\beta)} \Big\langle\nabla_t(\nabla_xu^{k}),\nabla_t(\nabla_xu^{n})\Big\rangle.
\end{align*}
\end{proof}

Now we consider the discretization of ${D_t^\beta}u(x,t)$ at grid points $(x_i,t_{n-\frac{1}{2}})$. From (\ref{e12}), we have
\begin{align}\label{e14}
{D_t^\beta}u(x_i,t_{n-\frac{1}{2}})\approx&\frac{1}{2}\Big[{D_t^\beta}u(x_i,t_{n})+{D_t^\beta}u(x_i,t_{n-1})\Big]\nonumber\\
=&\frac{\tau^{1-\beta}}{2\Gamma(2-\beta)}\Big[
\sum_{k=1}^{n}d_{n-k}^{(\beta)} \nabla_tu_i^{k}+\sum_{k=1}^{n-1}d_{n-1-k}^{(\beta)} \nabla_tu_i^{k}\Big]+R_3.
\end{align}
Similarly, we have
\begin{align}\label{e15}
{D_t^\beta}\frac{\partial^2 u(x_i,t_{n-\frac{1}{2}})}{\partial {x^2}}\approx&\frac{1}{2}\Big[{D_t^\beta}\frac{\partial^2u(x_i,t_n)}{\partial x^2}+{D_t^\beta}\frac{\partial^2u(x_i,t_{n-1})}{\partial x^2}\Big]\nonumber\\
=&\frac{\tau^{1-\beta}}{2\Gamma(2-\beta)}\Big[
\sum_{k=1}^{n}d_{n-k}^{(\beta)} \nabla_t(\delta_x^2u_i^{k})+
\sum_{k=1}^{n-1}d_{n-1-k}^{(\beta)} \nabla_t(\delta_x^2u_i^{k})\Big]+R_4.
\end{align}

\begin{lemma}\cite{Marcos}\label{lm7}
Let $\{g_0, g_1,\ldots, g_n,\ldots\}$ be a sequence of real numbers with the
properties
\begin{align*}
g_n\geq 0,\quad g_n-g_{n-1}\leq 0,\quad g_{n+1}-2g_n+g_{n-1}\geq 0.
\end{align*}
Then for any positive integer $M$, and for each vector $[V_1, V_2,\ldots, V_M]$ with $M$ real
entries,
\begin{align*}
\sum_{n=1}^{M}\bigg(\sum_{p=0}^{n-1} g_{p}\,V_{n-p}\bigg)V_n\geq 0.
\end{align*}
\end{lemma}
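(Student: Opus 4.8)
The plan is to rewrite the lower‑triangular convolution form as a symmetric Toeplitz form plus a harmless nonnegative multiple of $\sum_{n}V_{n}^{2}$, and then to prove that the symmetric kernel is positive definite by decomposing it — using the convexity hypothesis — into elementary positive‑definite pieces.

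First I would symmetrize: substituting $k=n-p$ and isolating the diagonal term $k=n$ gives the identity
\begin{align*}
\sum_{n=1}^{M}\Big(\sum_{p=0}^{n-1}g_{p}V_{n-p}\Big)V_{n}=\frac{g_{0}}{2}\sum_{n=1}^{M}V_{n}^{2}+\frac12\sum_{k=1}^{M}\sum_{n=1}^{M}g_{|n-k|}V_{k}V_{n}.
\end{align*}
Since $g_{0}\ge0$, it therefore suffices to show that the symmetric Toeplitz matrix $\big(g_{|n-k|}\big)_{1\le k,n\le M}$ is positive semidefinite for every $M$, i.e.\ that the two‑sided sequence $\big(g_{|m|}\big)_{m\in\mathbb Z}$ is positive definite.

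Next I would decompose that kernel. Since $(g_{n})$ is nonincreasing and bounded below by $0$, it converges to some $L\ge0$; writing $\Delta^{2}g_{l}:=g_{l+1}-2g_{l}+g_{l-1}\ge0$, a double telescoping — first $g_{p}-L=\sum_{k>p}(g_{k-1}-g_{k})$, then $g_{k-1}-g_{k}=\sum_{l\ge k}\Delta^{2}g_{l}$, the latter legitimate because the first differences are nonnegative, nonincreasing and tend to $0$ — yields
\begin{align*}
g_{|m|}=L+\sum_{l\ge1}(l-|m|)^{+}\,\Delta^{2}g_{l},\qquad m\in\mathbb Z,
\end{align*}
where $x^{+}:=\max\{x,0\}$; the series converges because $\sum_{l\ge1}l\,\Delta^{2}g_{l}=g_{0}-L<\infty$. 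As the coefficients $L$ and $\Delta^{2}g_{l}$ are all nonnegative, it remains only to check positive‑definiteness of the constant sequence (immediate: $\sum_{k,n}V_{k}V_{n}=(\sum_{n}V_{n})^{2}$) and of each triangular sequence $(l-|m|)^{+}$. For the latter, if $e^{(l)}$ denotes the indicator of $\{0,1,\dots,l-1\}$, then a direct count shows $(l-|m|)^{+}=\sum_{q}e^{(l)}_{q}e^{(l)}_{q+m}$ is an autocorrelation, so $\sum_{k,n}(l-|n-k|)^{+}V_{k}V_{n}=\sum_{q}\big(\sum_{n}e^{(l)}_{q+n}V_{n}\big)^{2}\ge0$. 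Substituting these facts back into the symmetrized identity proves the lemma.

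The hard part is the decomposition step: one must justify the two telescoping sums, and this is exactly where all three hypotheses are needed — nonnegativity and monotonicity produce the finite limit $L\ge0$ and a nonnegative, decreasing first‑difference sequence, while convexity gives $\Delta^{2}g_{l}\ge0$ and, via $\sum_{l}l\,\Delta^{2}g_{l}=g_{0}-L$, the absolute convergence of the representing series; everything else is routine algebra. (Alternatively, one may replace the autocorrelation computation by the classical nonnegativity of the Fej\'er kernel, $\sum_{|m|<l}(l-|m|)\,e^{\mathrm{i}m\theta}=\big|\sum_{q=0}^{l-1}e^{\mathrm{i}q\theta}\big|^{2}\ge0$.)
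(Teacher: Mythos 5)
Your argument is correct and complete. Note, however, that the paper does not actually prove this lemma: the statement carries the citation to Lopez--Marcos and the ``proof'' consists solely of that reference, so there is nothing internal to compare against step by step. What you have supplied is a self-contained proof along the classical lines: the symmetrization identity
\begin{align*}
\sum_{n=1}^{M}\Big(\sum_{p=0}^{n-1}g_{p}V_{n-p}\Big)V_{n}=\frac{g_{0}}{2}\sum_{n=1}^{M}V_{n}^{2}+\frac{1}{2}\sum_{k=1}^{M}\sum_{n=1}^{M}g_{|n-k|}V_{k}V_{n}
\end{align*}
checks out, and your double summation by parts is legitimate exactly as you say: monotonicity and nonnegativity give the limit $L\ge 0$ and a nonnegative, summable first-difference sequence; convexity makes the first differences nonincreasing, hence tending to $0$, which justifies the second telescoping; and the bound $\sum_{l}l\,\Delta^{2}g_{l}\le g_{0}-L$ (via Abel summation, with $Nh_{N+1}\ge 0$ discarded) gives absolute convergence of the representation $g_{|m|}=L+\sum_{l\ge 1}(l-|m|)^{+}\Delta^{2}g_{l}$, so the interchange with the finite sum over $k,n$ is harmless. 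The autocorrelation identity for $(l-|m|)^{+}$ (equivalently, Fej\'er kernel nonnegativity) then finishes the proof. This is essentially the mechanism behind the original reference, and it is arguably a cleaner template than what the authors do for their related Lemma~5, where the analogous positive-definiteness of a shifted kernel is argued via Toeplitz determinants supported by a numerical plot; your decomposition into a nonnegative combination of manifestly positive semidefinite kernels avoids any such appeal. One small optional simplification: since only $g_{0},\ldots,g_{M-1}$ enter the quadratic form, you could truncate and work with finite sums throughout, dispensing with the limit $L$ entirely, but your infinite-sequence treatment is fully rigorous as written.
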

Now we will prove a very important and useful lemma.
\begin{lemma}\label{lm5}
For $0<\beta<1$, define $d_k^{(\beta)}=(k+1)^{1-\beta}-k^{1-\beta}$, $k=0,1,2,\ldots,n$, then for any positive integer $N$ and vector $\mathbf{Q}=[v^1,v^2,\ldots,v^{N-1},v^{N}]\in R^{N}$, we have
\begin{align}\label{e16}
&\sum_{n=1}^{N}\sum_{k=1}^{n} d_{n-k}^{(\beta)}\,v^k v^n\geq 0,\\\label{e17}
&\sum_{n=1}^{N}\sum_{k=1}^{n} d_{n-k}^{(\beta)}\,v^k v^n+\sum_{n=1}^{N}\sum_{k=1}^{n-1} d_{n-1-k}^{(\beta)}\,v^k v^n\geq 0.
\end{align}
\end{lemma}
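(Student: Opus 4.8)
The plan is to treat the two inequalities separately, in each case trying to reduce to the positivity result of Lemma~\ref{lm7}. For \eqref{e16} I would substitute $p=n-k$, so that the left-hand side becomes $\sum_{n=1}^{N}\bigl(\sum_{p=0}^{n-1}d_p^{(\beta)}v^{n-p}\bigr)v^n$. By Lemma~\ref{lm3} the sequence $\{d_p^{(\beta)}\}$ satisfies $d_p^{(\beta)}\ge0$, $d_p^{(\beta)}-d_{p-1}^{(\beta)}\le0$ and $d_{p+1}^{(\beta)}-2d_p^{(\beta)}+d_{p-1}^{(\beta)}\ge0$, which are exactly the hypotheses of Lemma~\ref{lm7}; applying it with $g_p=d_p^{(\beta)}$ and $V_n=v^n$ gives \eqref{e16} at once.

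For \eqref{e17} this device fails at the very first step. Writing $P_n:=\sum_{k=1}^{n}d_{n-k}^{(\beta)}v^k$ and $P_0:=0$, the left-hand side equals $\sum_{n=1}^{N}(P_n+P_{n-1})v^n=\sum_{n=1}^{N}\bigl(\sum_{p=0}^{n-1}g_p v^{n-p}\bigr)v^n$ with $g_0=d_0^{(\beta)}=1$ and $g_p=d_p^{(\beta)}+d_{p-1}^{(\beta)}$ for $p\ge1$, but $\{g_p\}$ is not monotone since $g_1-g_0=d_1^{(\beta)}>0$, so Lemma~\ref{lm7} does not apply. I would get around this by using the stronger fact that $\{d_m^{(\beta)}\}$ is a completely monotone sequence: extending the reasoning behind Lemma~\ref{lm3} to all orders, this follows from $d_m^{(\beta)}=(1-\beta)\int_0^1(m+\sigma)^{-\beta}\,d\sigma$ together with the complete monotonicity of $x\mapsto x^{-\beta}$ on $(0,\infty)$. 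By the Hausdorff moment theorem there is then a probability measure $\mu$ on $[0,1]$ such that $d_m^{(\beta)}=\int_0^1 r^m\,d\mu(r)$ for all $m\ge0$, with the convention $r^0=1$.

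It then suffices to prove the corresponding inequality for every single geometric kernel $\{r^m\}$, $r\in[0,1]$. Fix $r$ and set $y^n:=\sum_{k=1}^{n}r^{n-k}v^k$, so that $y^0=0$ and $v^n=y^n-r\,y^{n-1}$. Substituting this into $\sum_{n=1}^{N}\bigl(\sum_{k=1}^{n}r^{n-k}v^k+\sum_{k=1}^{n-1}r^{n-1-k}v^k\bigr)v^n=\sum_{n=1}^{N}(y^n+y^{n-1})v^n$, expanding, using $\sum_{n=1}^{N}(y^{n-1})^2=\sum_{n=1}^{N-1}(y^n)^2$ and the elementary identity $2y^ny^{n-1}=(y^n+y^{n-1})^2-(y^n)^2-(y^{n-1})^2$ together with a telescoping sum, one arrives at the sum-of-squares formula
\begin{align*}
\sum_{n=1}^{N}\Bigl(\sum_{k=1}^{n}r^{n-k}v^k+\sum_{k=1}^{n-1}r^{n-1-k}v^k\Bigr)v^n
=\frac{1+r}{2}\,(y^N)^2+\frac{1-r}{2}\Bigl[(y^1)^2+\sum_{n=2}^{N}(y^n+y^{n-1})^2\Bigr]\ge0 ,
\end{align*}
since both $\tfrac{1+r}{2}\ge0$ and $\tfrac{1-r}{2}\ge0$ for $r\in[0,1]$.

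Finally, since $d_{n-k}^{(\beta)}=\int_0^1 r^{n-k}\,d\mu(r)$, I would integrate the last identity against $\mu$ and interchange the (finite) sums with the integral, which reproduces the left-hand side of \eqref{e17} as an integral of nonnegative quantities and hence establishes \eqref{e17}. The conceptual crux — and the step I expect to be the main obstacle — is the moment representation of $\{d_m^{(\beta)}\}$, which is what lets us trade the genuinely awkward, non-monotone kernel $\{g_p\}$ for geometric kernels; by contrast the sum-of-squares identity and the interchange of summation and integration are routine.
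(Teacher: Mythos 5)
Your proof of (\ref{e16}) coincides with the paper's: rewrite the double sum as $\sum_{n=1}^{N}\big(\sum_{p=0}^{n-1}d_p^{(\beta)}v^{n-p}\big)v^n$ and apply Lemma~\ref{lm7} using the properties collected in Lemma~\ref{lm3}. For (\ref{e17}) you take a genuinely different route. The paper writes the left-hand side as $\mathbf{Q}A\mathbf{Q}^T$ for a lower-triangular Toeplitz matrix $A$ and reduces the claim to positive definiteness of $H_N=\frac{1}{2}(A+A^T)$, which it supports by computing $\det(H_1)$ and $\det(H_2)$, asserting that $\det(H_N)>0$ ``can be explicitly calculated'' for finite $N$, citing an asymptotic determinant-ratio result of B\"ottcher and Silbermann for large $N$, and displaying a numerical plot; as written this is not a fully airtight argument. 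Your observation that Lemma~\ref{lm7} cannot be applied directly to the combined kernel (since $g_1-g_0=d_1^{(\beta)}>0$ breaks the required monotonicity) is correct, and your remedy --- the representation $d_m^{(\beta)}=(1-\beta)\int_0^1(m+\sigma)^{-\beta}\,d\sigma$, complete monotonicity, the Hausdorff/Bernstein moment representation $d_m^{(\beta)}=\int_0^1 r^m\,d\mu(r)$ with $\mu$ a probability measure, reduction to geometric kernels, and the final interchange of sum and integral --- is sound; I have checked directly that with $y^n=\sum_{k=1}^{n}r^{n-k}v^k$ and $v^n=y^n-ry^{n-1}$ one indeed obtains $\sum_{n=1}^{N}(y^n+y^{n-1})(y^n-ry^{n-1})=\frac{1+r}{2}(y^N)^2+\frac{1-r}{2}\big[(y^1)^2+\sum_{n=2}^{N}(y^n+y^{n-1})^2\big]$. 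What your approach buys is a complete, $N$-uniform proof of (\ref{e17}) (and it would give (\ref{e16}) for free from the same representation), at the price of importing the moment-theorem machinery; what the paper's approach buys is a quick reduction to a linear-algebra statement, but one whose verification is left partly to explicit computation and asymptotics.
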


\begin{proof}
It is  easy to check that
\begin{align*}
\sum_{n=1}^{N}\sum_{k=1}^{n} d_{n-k}^{(\beta)}\,v^kv^n=\sum_{n=1}^{N}\bigg(\sum_{k=0}^{n-1} d_{k}^{(\beta)}\,v^{n-k}\bigg)v^n.
\end{align*}
Then using Lemmas \ref{lm3} and \ref{lm7}, we have
\begin{align*}
\sum_{n=1}^{N}\bigg(\sum_{k=0}^{n-1} d_{k}^{(\beta)}\,v^{n-k}\bigg)v^n\geq 0,
\end{align*}
i.e.,
\begin{align*}
\sum_{n=1}^{N}\sum_{k=1}^{n} d_{n-k}^{(\beta)}\,v^k v^n\geq 0.
\end{align*}
For the sum in (\ref{e17}), we can rewrite it in the following form
\begin{align*}
\sum_{n=1}^{N}\sum_{k=1}^{n} d_{n-k}^{(\beta)}\,v^k v^n+\sum_{n=1}^{N}\sum_{k=1}^{n-1} d_{n-1-k}^{(\beta)}\,v^k v^n
=\mathbf{Q}A\mathbf{Q}^T,
\end{align*}
where
\begin{align*}
A=\left[\begin{array}{cccccc}
                  d_{0}^{(\beta)}& 0 & 0 &  \cdots & 0 & 0 \\
                  d_{0}^{(\beta)}+d_{1}^{(\beta)} & d_{0}^{(\beta)} & 0 &  \cdots & 0 & 0 \\
                  d_{1}^{(\beta)}+d_{2}^{(\beta)} & d_{0}^{(\beta)}+d_{1}^{(\beta)} & d_{0}^{(\beta)} &  \cdots & 0 & 0 \\
                  d_{2}^{(\beta)}+d_{3}^{(\beta)} & d_{1}^{(\beta)}+d_{2}^{(\beta)} & d_{0}^{(\beta)}+d_{1}^{(\beta)} &  \cdots & 0 & 0 \\
                  \vdots & \vdots& \vdots & \ddots & \vdots & \vdots \\
                  d_{N-3}^{(\beta)}+d_{N-2}^{(\beta)} & d_{N-4}^{(\beta)}+d_{N-3}^{(\beta)} & d_{N-5}^{(\beta)}+d_{N-4}^{(\beta)} & \cdots & d_{0}^{(\beta)} & 0 \\
                  d_{N-2}^{(\beta)}+d_{N-1}^{(\beta)} & d_{N-3}^{(\beta)}+d_{N-2}^{(\beta)}  & d_{N-4}^{(\beta)}+d_{N-3}^{(\beta)}  & \cdots & d_{0}^{(\beta)}+d_{1}^{(\beta)} &d_{0}^{(\beta)}\\
                  \end{array}
                  \right].
\end{align*}
We can notice that to prove (\ref{e17}) is equivalent to proving the matrix $A$ is positive definite. Therefore we only need to prove $H_{N}=\frac{A+A^T}{2}$ is positive definite \cite{Qua}. $H_{N}$ is a real symmetric Toeplitz matrix and has the form
\begin{align*}
H_{N}=\frac{1}{2}\left[\begin{array}{cccccc}
                  2d_{0}^{(\beta)}& d_{0}^{(\beta)}+d_{1}^{(\beta)} & d_{1}^{(\beta)}+d_{2}^{(\beta)} &  \cdots &  d_{N-3}^{(\beta)}+d_{N-2}^{(\beta)} & d_{N-2}^{(\beta)}+d_{N-1}^{(\beta)} \\
                  d_{0}^{(\beta)}+d_{1}^{(\beta)} & 2d_{0}^{(\beta)} & d_{0}^{(\beta)}+d_{1}^{(\beta)} &  \cdots &  d_{N-4}^{(\beta)}+d_{N-3}^{(\beta)} &  d_{N-3}^{(\beta)}+d_{N-2}^{(\beta)} \\
                  d_{1}^{(\beta)}+d_{2}^{(\beta)} & d_{0}^{(\beta)}+d_{1}^{(\beta)} & 2d_{0}^{(\beta)} &  \cdots & d_{N-5}^{(\beta)}+d_{N-4}^{(\beta)} &  d_{N-4}^{(\beta)}+d_{N-3}^{(\beta)} \\
                  d_{2}^{(\beta)}+d_{3}^{(\beta)} & d_{1}^{(\beta)}+d_{2}^{(\beta)} & d_{0}^{(\beta)}+d_{1}^{(\beta)} &  \cdots &d_{N-6}^{(\beta)}+d_{N-5}^{(\beta)}  & d_{N-5}^{(\beta)}+d_{N-4}^{(\beta)} \\
                  \vdots & \vdots& \vdots & \ddots & \vdots & \vdots \\
                  d_{N-3}^{(\beta)}+d_{N-2}^{(\beta)} & d_{N-4}^{(\beta)}+d_{N-3}^{(\beta)} & d_{N-5}^{(\beta)}+d_{N-4}^{(\beta)} & \cdots & 2d_{0}^{(\beta)} & d_{0}^{(\beta)}+d_{1}^{(\beta)} \\
                  d_{N-2}^{(\beta)}+d_{N-1}^{(\beta)} & d_{N-3}^{(\beta)}+d_{N-2}^{(\beta)}  & d_{N-4}^{(\beta)}+d_{N-3}^{(\beta)}  & \cdots & d_{0}^{(\beta)}+d_{1}^{(\beta)} &2d_{0}^{(\beta)}\\
                  \end{array}
                  \right].
\end{align*}
In the following, we will prove $\det(H_N)>0$. It is straightforward to verify that $\det(H_1)=d_{0}^{(\beta)}>0$, $\det(H_2)=(d_{0}^{(\beta)})^2-\frac{(d_{0}^{(\beta)}+d_{1}^{(\beta)})^2}{4}>0$. For a finite integer $N$, we can explicitly calculate the value of $\det(H_{N})>0$. When $N$ is a sufficiently large, according to \cite{Bo} (Propositions 10.2 and 10.4), we have
\begin{align*}
\frac{\det(H_N)}{\det(H_{N+1})}>0.
\end{align*}
Then we can conclude that $\det(H_{N+1})>0$. To illustrate this, we give a figure plot of $\frac{\det(H_N)}{\det(H_{N+1})}$  with different $\beta$ and $N$ (see Fig. 1). We can see that $\frac{\det(H_N)}{\det(H_{N+1})}>0$, particularly, when almost $N>250$, $\frac{\det(H_N)}{\det(H_{N+1})}\approx C_{\beta}>0$.
\begin{figure}[htbp]
\centering
\scalebox{0.6}[0.6]{\includegraphics{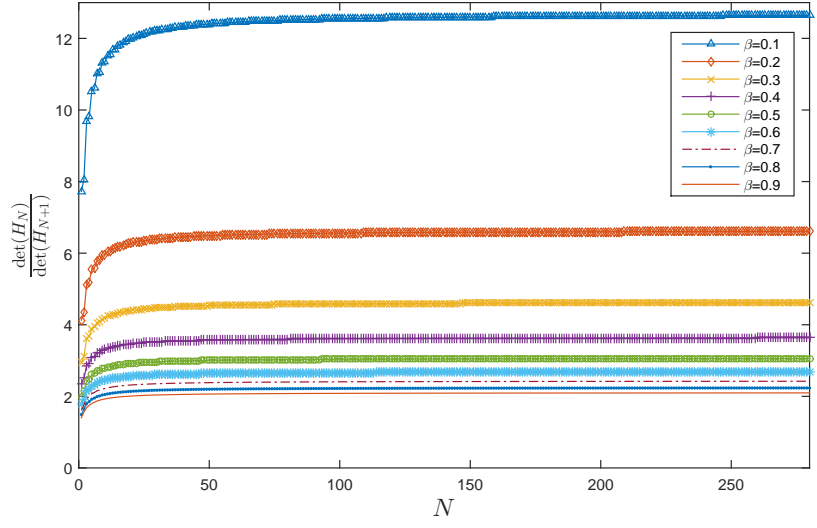}}
\caption{Figure plot of $\frac{\det(H_N)}{\det(H_{N+1})}$  with different $\beta$ and $N$.}
\end{figure}

As matrix $H_i,~i=1,2,\ldots,N$ are the principal minors of matrix $H_{N+1}$ and $\det(H_k)>0,~k=1,2,\ldots,N+1$, then the
real symmetric Toeplitz matrix $H_{N+1}$ is positive definite. The proof is completed.
\end{proof}

To derive the finite difference scheme we also need the following lemma.
\begin{lemma}\cite{Chen12}\label{lm6}
If $u(x,t)\in C_{x,t}^{0,3}(\Omega)$, then we have
\begin{align}\label{e18}
u(x_i,t_{n-\frac{1}{2}})&=\frac{u(x_i,t_n)+u(x_i,t_{n-1})}{2}+O(\tau^2),\\\label{e19}
\frac{\partial }{\partial t}u(x_i,t_{n-\frac{1}{2}})&=\frac{u(x_i,t_n)-u(x_i,t_{n-1})}{\tau}+O(\tau^2).
\end{align}
\end{lemma}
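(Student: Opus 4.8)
The plan is to prove both identities by a single symmetric Taylor expansion about the midpoint $t_{n-\frac{1}{2}}=\frac{t_n+t_{n-1}}{2}$, exploiting the fact that $t_n$ and $t_{n-1}$ sit symmetrically at distance $\tau/2$ on either side of $t_{n-\frac{1}{2}}$. Since $u(x,t)\in C_{x,t}^{0,3}(\Omega)$, for fixed $x_i$ the map $t\mapsto u(x_i,t)$ is three times continuously differentiable, so Taylor's theorem with Lagrange remainder gives
\begin{align*}
u(x_i,t_n)&=u(x_i,t_{n-\frac{1}{2}})+\frac{\tau}{2}\frac{\partial u}{\partial t}(x_i,t_{n-\frac{1}{2}})
+\frac{1}{2}\Big(\frac{\tau}{2}\Big)^2\frac{\partial^2 u}{\partial t^2}(x_i,t_{n-\frac{1}{2}})
+\frac{1}{6}\Big(\frac{\tau}{2}\Big)^3\frac{\partial^3 u}{\partial t^3}(x_i,\eta_n),\\
u(x_i,t_{n-1})&=u(x_i,t_{n-\frac{1}{2}})-\frac{\tau}{2}\frac{\partial u}{\partial t}(x_i,t_{n-\frac{1}{2}})
+\frac{1}{2}\Big(\frac{\tau}{2}\Big)^2\frac{\partial^2 u}{\partial t^2}(x_i,t_{n-\frac{1}{2}})
-\frac{1}{6}\Big(\frac{\tau}{2}\Big)^3\frac{\partial^3 u}{\partial t^3}(x_i,\zeta_n),
\end{align*}
for suitable intermediate points $\eta_n,\zeta_n\in(t_{n-1},t_n)$.

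To establish (\ref{e18}) I would add these two expansions. The first-order terms $\pm\frac{\tau}{2}\frac{\partial u}{\partial t}$ cancel, the second-order terms combine, and the two cubic remainders differ by a bounded quantity, yielding $u(x_i,t_n)+u(x_i,t_{n-1})=2u(x_i,t_{n-\frac{1}{2}})+\frac{\tau^2}{4}\frac{\partial^2 u}{\partial t^2}(x_i,t_{n-\frac{1}{2}})+O(\tau^3)$. Dividing by $2$ gives the claimed identity, the error being dominated by the surviving quadratic term, hence $O(\tau^2)$. Boundedness of $\frac{\partial^3 u}{\partial t^3}$ (continuous on the compact closure $\overline{\Omega}$) is what makes the cubic residual genuinely $O(\tau^3)$.

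To establish (\ref{e19}) I would instead subtract the two expansions. Now the even-order terms cancel: both the $u(x_i,t_{n-\frac{1}{2}})$ terms and the second-order $\frac{1}{2}(\tau/2)^2\frac{\partial^2 u}{\partial t^2}$ terms drop out, leaving $u(x_i,t_n)-u(x_i,t_{n-1})=\tau\,\frac{\partial u}{\partial t}(x_i,t_{n-\frac{1}{2}})+O(\tau^3)$, where the $O(\tau^3)$ arises from the sum of the two cubic remainders, again controlled by the $C^{0,3}$ regularity. Dividing through by $\tau$ produces the second identity with remainder $O(\tau^2)$.

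There is no serious obstacle; the argument is routine. The only point worth flagging is that the improvement from the naive $O(\tau)$ to $O(\tau^2)$ in both formulas is precisely the payoff of expanding about the symmetric midpoint rather than about an endpoint: the symmetry forces the leading odd (respectively even) terms to vanish. The hypothesis $C_{x,t}^{0,3}$ is exactly what is needed to bound the third-derivative remainders and is therefore sharp for this order of accuracy (note that (\ref{e18}) alone would already hold under $C_{x,t}^{0,2}$). Since the statement is quoted from \cite{Chen12}, one may simply cite it, but the self-contained derivation above is immediate.
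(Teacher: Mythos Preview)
Your proof is correct; the paper itself does not prove this lemma but simply cites it from \cite{Chen12}, so there is no approach to compare against. The symmetric Taylor expansion you give is the standard argument and exactly what one would expect a reader to supply.
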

For the discretization of the time fractional derivative ${D_t^\alpha}u(x,t)$ $(0<\alpha<1)$, it is the same with ${D_t^\beta}u(x,t)$.
\section{Derivation  of the numerical schemes}

In this section, we will give two different finite difference schemes of Eq.(\ref{e04}).
\subsection{Scheme I: first order implicit scheme}
Assume that $u(x,t)\in C_{x,t}^{4,3}(\Omega)$, from Eq.(\ref{e04}), we have
\begin{align}
&\sum_{j=1}^{s}b_j\,{D}_t^{\gamma_j}u(x_i,t_n)+{a_1}\frac{\partial u(x_i,t_n)}{\partial t}+\sum_{l=1}^{q}c_l\,{D}_t^{\alpha_l}u(x_i,t_n)+a_2u(x_i,t_n)\nonumber\\\label{e20}
=&{a_3}\frac{\partial^2 u(x_i,t_n)}{\partial {x^2}}
+{a_4}{D_t^\beta}\frac{\partial^2 u(x_i,t_n)}{\partial {x^2}}+f(x_i,t_n).
\end{align}
Using Eqs.(\ref{e10}), (\ref{e12}) and (\ref{e13}), we obtain
\begin{align}
&\sum_{j=1}^{s}b_j\mu_{1,j}\Big[a_0^{(\gamma_j)}\nabla_tu_{i}^{n}-
\sum_{k=1}^{n-1}(a_{n-k-1}^{(\gamma_j)}-a_{n-k}^{(\gamma_j)})\nabla_tu_{i}^{k}-
a_{n-1}^{(\gamma_j)}\phi_2(x_i)\Big]\nonumber\\
+&a_1\nabla_tu_{i}^{n}+\sum_{l=1}^{q}c_l\mu_{2,l}
\sum_{k=1}^{n}d_{n-k}^{(\alpha_l)} \nabla_tu_i^{k}
+a_2u_{i}^{n}\nonumber\\\label{e21}
=&a_3\delta_x^2u_{i}^{n}+a_4\mu_3
\sum_{k=1}^{n}d_{n-k}^{(\beta)} \nabla_t(\delta_x^2u_i^{k})+f_i^n+R_{1,i}^n,
\end{align}
where $\mu_{1,j}=\frac{\tau^{1-\gamma_j}}{\Gamma(3-\gamma_j)}$,
 $\mu_{2,l}=\frac{\tau^{1-\alpha_l}}{\Gamma(2-\alpha_l)}$, $\mu_3=\frac{\tau^{1-\beta}}{\Gamma(2-\beta)}$ and $|R_{1,i}^n|\leq C(\tau+h^2)$, in which $C$ is independent of $\tau$ and $h$. Then, omitting the error term and denoting $U_{i}^{n}$ as the numerical approximation to $u_{i}^{n}$, the implicit finite difference scheme for Eq.(\ref{e04}) at point $(x_i,t_n)$ is given by
\begin{align}
&\sum_{j=1}^{s}b_j\mu_{1,j}\Big[a_0^{(\gamma_j)}\nabla_tU_{i}^{n}-
\sum_{k=1}^{n-1}(a_{n-k-1}^{(\gamma_j)}-a_{n-k}^{(\gamma_j)})\nabla_tU_{i}^{k}-
a_{n-1}^{(\gamma_j)}\phi_2(x_i)\Big]\nonumber\\
+&a_1\nabla_tU_{i}^{n}+\sum_{l=1}^{q}c_l\mu_{2,l}
\sum_{k=1}^{n}d_{n-k}^{(\alpha_l)} \nabla_tU_i^{k}
+a_2U_{i}^{n}\nonumber\\\label{e22}
=&a_3\delta_x^2U_{i}^{n}+a_4\mu_3
\sum_{k=1}^{n}d_{n-k}^{(\beta)} \nabla_t(\delta_x^2U_i^{k})+f_i^n,
\end{align}
with initial and boundary conditions
\begin{align*}
U^0_i=\phi_1(x_i),\quad 0\leq i\leq M, \quad U_0^n=U_M^n=0,\quad 1\leq n\leq N.
\end{align*}

\subsection{Scheme II: mixed L scheme}

Assume that $u(x,t)\in C_{x,t}^{4,3}(\Omega)$, from Eq.(\ref{e04}), we have
\begin{align}
&\sum_{j=1}^{s}b_j\,{D}_t^{\gamma_j}u(x_i,t_{n-\frac{1}{2}})+{a_1}\frac{\partial u(x_i,t_{n-\frac{1}{2}})}{\partial t}+\sum_{l=1}^{q}c_l\,{D}_t^{\alpha_l}u(x_i,t_{n-\frac{1}{2}})+a_2u(x_i,t_{n-\frac{1}{2}})\nonumber\\\label{e23}
=&{a_3}\frac{\partial^2 u(x_i,t_{n-\frac{1}{2}})}{\partial {x^2}}
+{a_4}{D_t^\beta}\frac{\partial^2 u(x_i,t_{n-\frac{1}{2}})}{\partial {x^2}}+f(x_i,t_{n-\frac{1}{2}}).
\end{align}
Applying  Eqs.(\ref{e11}), (\ref{e14}) and (\ref{e15}) and Lemma \ref{lm6}, we have
\begin{align}
&\sum_{j=1}^{s}b_j\mu_{1,j}\Big[a_0^{(\gamma_j)}\nabla_tu_{i}^{n}-
\sum_{k=1}^{n-1}(a_{n-k-1}^{(\gamma_j)}-a_{n-k}^{(\gamma_j)})\nabla_tu_{i}^{k}-
a_{n-1}^{(\gamma_j)}\phi_2(x_i)\Big]+a_1\nabla_tu_{i}^{n}\nonumber\\
+&\sum_{l=1}^{q}\frac{c_l\mu_{2,l}}{2}\Big[
\sum_{k=1}^{n}d_{n-k}^{(\alpha_l)} \nabla_tu_i^{k}+\sum_{k=1}^{n-1}d_{n-1-k}^{(\alpha_l)} \nabla_tu_i^{k}\Big]
+a_2u_{i}^{n-\frac{1}{2}}\label{e24}\\\nonumber
=&a_3\delta_x^2u_{i}^{n-\frac{1}{2}}
+\frac{a_4\mu_3}{2}\Big[
\sum_{k=1}^{n}d_{n-k}^{(\beta)} \nabla_t(\delta_x^2u_i^{k})+
\sum_{k=1}^{n-1}d_{n-1-k}^{(\beta)} \nabla_t(\delta_x^2u_i^{k})\Big]+f_i^{n-\frac{1}{2}}+R_{2,i}^n,
\end{align}
where $|R_{2,i}^n|\leq C(\tau^{\min\{3-\gamma_s,2-\alpha_q,2-\beta\}}+h^2)$. Then, omitting the error term, we obtain the mixed L finite difference scheme for Eq.(\ref{e04}) at point $(x_i,t_{n-\frac{1}{2}})$
\begin{align}
&\sum_{j=1}^{s}b_j\mu_{1,j}\Big[a_0^{(\gamma_j)}\nabla_tU_{i}^{n}-
\sum_{k=1}^{n-1}(a_{n-k-1}^{(\gamma_j)}-a_{n-k}^{(\gamma_j)})\nabla_tU_{i}^{k}-
a_{n-1}^{(\gamma_j)}\phi_2(x_i)\Big]+a_1\nabla_tU_{i}^{n}\nonumber\\
+&\sum_{l=1}^{q}\frac{c_l\mu_{2,l}}{2}\Big[
\sum_{k=1}^{n}d_{n-k}^{(\alpha_l)} \nabla_tU_i^{k}+\sum_{k=1}^{n-1}d_{n-1-k}^{(\alpha_l)} \nabla_tU_i^{k}\Big]
+a_2U_{i}^{n-\frac{1}{2}}\label{e25}\\\nonumber
=&a_3\delta_x^2U_{i}^{n-\frac{1}{2}}
+\frac{a_4\mu_3}{2}\Big[
\sum_{k=1}^{n}d_{n-k}^{(\beta)} \nabla_t(\delta_x^2U_i^{k})+
\sum_{k=1}^{n-1}d_{n-1-k}^{(\beta)} \nabla_t(\delta_x^2U_i^{k})\Big]+f_i^{n-\frac{1}{2}}.
\end{align}

\begin{Rem}
Compared to scheme I, scheme II has high order accuracy. However, more terms are added in the scheme II.
\end{Rem}
\section{Theoretical analysis }%
\subsection{Solvability}
Firstly, we discuss the solvability of the finite difference scheme (\ref{e22}).
\begin{thm}\label{thm1}
The finite difference scheme (\ref{e22}) is uniquely solvable.
\end{thm}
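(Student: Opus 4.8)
The plan is to argue by induction on the time level $n$, showing at each step that the scheme (\ref{e22}) amounts to a square linear system for the unknown vector $U^n=(U_1^n,\ldots,U_{M-1}^n)\in\mathcal{V}_h$ whose coefficient matrix is nonsingular. First I would note that for $n=0$ the values $U_i^0=\phi_1(x_i)$ are prescribed, and $U_i^{-1}=U_i^0-\tau\phi_2(x_i)$ is determined, so there is nothing to solve. Assume now that $U^0,U^1,\ldots,U^{n-1}$ are uniquely determined. In (\ref{e22}) the only terms that involve the new level $U^n$ are the $k=n$ contributions of the sums $\sum_{k=1}^{n}d_{n-k}^{(\alpha_l)}\nabla_tU_i^k$ and $\sum_{k=1}^{n}d_{n-k}^{(\beta)}\nabla_t(\delta_x^2U_i^k)$, together with $a_0^{(\gamma_j)}\nabla_tU_i^n$, $a_1\nabla_tU_i^n$, $a_2U_i^n$ and $a_3\delta_x^2U_i^n$; using $\nabla_tU_i^n=(U_i^n-U_i^{n-1})/\tau$ and collecting, the scheme takes the form
\begin{align*}
c_0\,U_i^n-c_1\,\delta_x^2U_i^n=g_i^n,\qquad 1\le i\le M-1,\qquad U_0^n=U_M^n=0,
\end{align*}
where $g_i^n$ depends only on $f_i^n$, $\phi_2(x_i)$ and the already-known levels $U_i^0,\ldots,U_i^{n-1}$, and
\begin{align*}
c_0=\frac{1}{\tau}\Big(\sum_{j=1}^{s}b_j\mu_{1,j}a_0^{(\gamma_j)}+a_1+\sum_{l=1}^{q}c_l\mu_{2,l}d_0^{(\alpha_l)}\Big)+a_2,\qquad
c_1=a_3+\frac{a_4\mu_3 d_0^{(\beta)}}{\tau}.
\end{align*}

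The key observation is that $c_0>0$ and $c_1>0$. Indeed $a_0^{(\gamma_j)}=1$ and $d_0^{(\alpha_l)}=d_0^{(\beta)}=1$ by definition, the weights $\mu_{1,j},\mu_{2,l},\mu_3$ are positive, $a_i>0$, $b_j\ge0$, $c_l\ge0$, so both constants are strictly positive. Hence the matrix of the system is $c_0 I-c_1 B$, where $B$ is the matrix of $\delta_x^2$ acting on $\mathcal{V}_h$; since $B=\tfrac{1}{h^2}\mathrm{tridiag}(1,-2,1)$ is symmetric negative definite, $c_0 I-c_1 B$ is symmetric positive definite and therefore invertible. Equivalently (and this is the argument I would actually write out), I would take the discrete inner product of the homogeneous version of the equation ($g^n\equiv0$) with $U^n$ and invoke (\ref{e07}): this yields $c_0\|U^n\|_0^2+c_1|U^n|_1^2=0$, which forces $U^n=0$ because $c_0,c_1>0$. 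Thus $U^n$ is uniquely determined, and the induction closes.

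I do not expect a genuine obstacle here; the proof is essentially bookkeeping. The one point requiring care is the clean separation of the $U^n$-dependent part of (\ref{e22}) from the history terms, so that one correctly identifies $c_0$ and $c_1$ and verifies their positivity from the sign hypotheses on $a_i,b_j,c_l$ and from $a_0^{(\gamma_j)}=d_0^{(\alpha_l)}=d_0^{(\beta)}=1$ (Lemma~\ref{lm3}). After that the unique solvability is immediate from positive definiteness of $c_0 I-c_1\delta_x^2$, and the same scheme of proof applies verbatim to scheme~(\ref{e25}) once $u_i^{n-\frac12}$ and $\delta_x^2u_i^{n-\frac12}$ are expanded in terms of the $n$-th and $(n-1)$-st levels.
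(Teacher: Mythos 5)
Your proposal is correct and follows essentially the same route as the paper: both reduce the scheme at each time level to the tridiagonal system with diagonal entry $d_1+2d_2$ and off-diagonal entry $-d_2$ (your $c_0$ and $c_1/h^2$ coincide with the paper's $d_1$ and $d_2$), and both rest on the positivity of these constants. The only difference is the last step — the paper invokes strict diagonal dominance where you invoke symmetric positive definiteness via (\ref{e07}) — which is an equally valid and equally standard way to conclude nonsingularity.
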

\begin{proof}
At each time level, the coefficient matrix $B$ is linear tridiagonal
\begin{equation*}
B=\left[\begin{array}{cccccc}
d_1+2{d_2} & -{d_2} & 0& \cdots & 0&0 \\
-{d_2} & d_1+2{d_2} &-{d_2}&  \cdots & 0 &0\\
0 &  -{d_2}&d_1+2{d_2}&  \cdots & 0 &0 \\
\vdots & \vdots & \vdots & \ddots &\vdots &\vdots\\
0 & 0 & 0 & \cdots &d_1+2{d_2} &-{d_2}\\
0 & 0& 0& \cdots &-{d_2}& d_1+2{d_2} \\
\end{array}\right],
\end{equation*}
where $d_1=\sum\limits_{j=1}^{s}\frac{b_j\mu_{1,j}}{\tau}+\frac{a_1}{\tau}+\sum\limits_{l=1}^{q}\frac{c_l\mu_{2,l}}{\tau}+a_2>0$ and $d_2=\frac{a_3}{h^2}+\frac{a_4\mu_3}{\tau h^2}>0$.
Then $B$ is a strictly diagonally dominant matrix. Therefore $B$ is nonsingular, which means that the numerical scheme (\ref{e22}) is uniquely solvable.
\end{proof}

The solvability of the finite difference scheme (\ref{e25}) is similar.
\subsection{Stability}
Here, we will analyze the stability of the schemes (\ref{e22}) and (\ref{e25}) using the energy method.
\begin{thm}\label{thm2}
The implicit finite difference scheme (\ref{e22}) is unconditionally stable and it holds that
\begin{align*}
||U^{N}||_1^2\leq ||U^{0}||_1^2+\sum_{j=1}^{s}\frac{b_jT^{2-\gamma_j}}{\Gamma(3-\gamma_j)}||\phi_2||_0^2+\frac{T}{2\varepsilon_0}
\max\limits_{1\leq n\leq N}||f^n||_0^2,
\end{align*}
where $\varepsilon_0=\sum\limits_{j=1}^{s}\frac{b_j T^{1-\gamma_j}}{2\Gamma(2-\gamma_j)}+a_1$ and $U^N=[U_{1}^{N},U_{2}^{N},\dots,U_{M-1}^{N}]^T$ is the solution vector of (\ref{e22}).
\end{thm}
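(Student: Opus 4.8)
The plan is to run the discrete energy method. I would take the $(\cdot,\cdot)$ inner product of scheme~(\ref{e22}) with $\nabla_t U^n$ (that is, multiply by $h\,\nabla_t U_i^n$ and sum over $i=1,\dots,M-1$), then sum the resulting identity over $n=1,\dots,N$, and bound each group of terms so that the ``good'' quantities $||U^{N}||_1^2$ and $\varepsilon_0\sum_{n}||\nabla_t U^n||_0^2$ are produced on the left, while the initial data and the source appear on the right.

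For the multi-term time-fractional part I would first treat the $\gamma_j$-terms by Lemma~\ref{lm2} with $S_n=\nabla_t U_i^n$ and $P=\phi_2(x_i)$; summing in $i$ (with weight $h$) and then in $j$ gives the lower bound $\sum_{j=1}^{s}\frac{b_jT^{1-\gamma_j}}{2\Gamma(2-\gamma_j)}\sum_{n=1}^{N}||\nabla_t U^n||_0^2-\sum_{j=1}^{s}\frac{b_jT^{2-\gamma_j}}{2\tau\Gamma(3-\gamma_j)}||\phi_2||_0^2$. The $a_1$-term contributes exactly $a_1\sum_{n=1}^{N}||\nabla_t U^n||_0^2$, so together with the previous bound the coefficient of $\sum_n||\nabla_t U^n||_0^2$ becomes precisely $\varepsilon_0$. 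For each $\alpha_l$-term, Lemma~\ref{lm5} applied componentwise with $v^k=\nabla_t U_i^k$ (and then summed in $i$) shows $\sum_{n=1}^{N}\sum_{k=1}^{n}d_{n-k}^{(\alpha_l)}(\nabla_t U^k,\nabla_t U^n)\ge 0$, so these terms may be discarded from the left-hand side.

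Next come the remaining terms. I would bound $a_2\sum_n(U^n,\nabla_t U^n)$ below by $\frac{a_2}{2\tau}(||U^{N}||_0^2-||U^{0}||_0^2)$ via the elementary inequality $(w,w-v)\ge\frac{1}{2}(||w||_0^2-||v||_0^2)$ and telescoping; rewrite the diffusion term using~(\ref{e08}) as $a_3(\delta_x^2U^n,\nabla_t U^n)=-a_3\langle\nabla_xU^n,\nabla_t(\nabla_xU^n)\rangle$ and apply the same inequality to $\nabla_x U$, so after transposition it contributes $\frac{a_3}{2\tau}(|U^{N}|_1^2-|U^{0}|_1^2)$; combining these gives $\frac{1}{2\tau}(||U^{N}||_1^2-||U^{0}||_1^2)$. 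The coupled operator term is handled by Lemma~\ref{lm4}, which turns $a_4\mu_3\sum_{k=1}^{n}d_{n-k}^{(\beta)}(\nabla_t(\delta_x^2U^k),\nabla_t U^n)$ into $-a_4\mu_3\sum_{k=1}^{n}d_{n-k}^{(\beta)}\langle\nabla_t(\nabla_xU^k),\nabla_t(\nabla_xU^n)\rangle$; summing over $n$ and invoking Lemma~\ref{lm5} on the edge grid shows this sum is $\le 0$, so after transposition it also has the right sign. Finally I would bound the source by Cauchy--Schwarz and Young's inequality with parameter taken to be exactly $2\varepsilon_0$, i.e. $(f^n,\nabla_t U^n)\le\varepsilon_0||\nabla_t U^n||_0^2+\frac{1}{4\varepsilon_0}||f^n||_0^2$, so that the term $\varepsilon_0\sum_n||\nabla_t U^n||_0^2$ it generates cancels the one accumulated on the left. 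Collecting everything yields
\begin{align*}
\frac{1}{2\tau}\big(||U^{N}||_1^2-||U^{0}||_1^2\big)-\sum_{j=1}^{s}\frac{b_jT^{2-\gamma_j}}{2\tau\Gamma(3-\gamma_j)}||\phi_2||_0^2\le\frac{1}{4\varepsilon_0}\sum_{n=1}^{N}||f^n||_0^2\le\frac{N}{4\varepsilon_0}\max\limits_{1\le n\le N}||f^n||_0^2,
\end{align*}
and multiplying by $2\tau$ and using $N\tau=T$ gives the asserted bound; unconditional stability follows since no restriction linking $\tau$ and $h$ was imposed.

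The step I expect to be the main obstacle is the coupled term $a_4\mu_3\sum_{k=1}^{n}d_{n-k}^{(\beta)}\nabla_t(\delta_x^2U^k)$: the positivity Lemma~\ref{lm5} cannot be applied to it directly, and one must first trade the discrete Laplacian for a discrete gradient through Lemma~\ref{lm4} (this is exactly why identity~(\ref{e08}) was recorded) before the lemma becomes usable on the edge variables $\nabla_x U$. A secondary delicate point is the choice of the Young's-inequality constant, which must be tuned so that the positive multiple of $\sum_n||\nabla_t U^n||_0^2$ is absorbed exactly without residue — this is precisely what fixes the constant $\varepsilon_0$ in the statement.
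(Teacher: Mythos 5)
Your proposal is correct and follows essentially the same route as the paper's proof: the same energy argument (testing with $\nabla_t U^n$ and summing in $n$), the same use of Lemma~\ref{lm2} for the $\gamma_j$-terms, Lemma~\ref{lm5} for the $\alpha_l$-terms, the identity~(\ref{e08}) together with Lemma~\ref{lm4} and Lemma~\ref{lm5} for the coupled $a_4$-term, and Young's inequality with parameter $\varepsilon_0$ for the source. The only cosmetic difference is that you carry the factor $\tau$ at the end rather than multiplying by $h\tau\nabla_t U_i^n$ at the outset, which changes nothing.
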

\begin{proof}
Multiplying Eq.(\ref{e22}) by $h\tau \nabla_tU_i^{n}$ and summing $i$ from 1 to $M-1$ and $n$ from 1 to $N$, we obtain
\begin{align}
&\tau\sum_{j=1}^{s}b_j\mu_{1,j}\sum_{n=1}^{N}\sum_{i=1}^{M-1}h\Big[a_0^{(\gamma_j)}\nabla_tU_i^{n}-
\sum_{k=1}^{n-1}(a_{n-k-1}^{(\gamma_j)}-a_{n-k}^{(\gamma_j)})\nabla_tU_i^{k}-
a_{n-1}^{(\gamma_j)}\phi_2(x_i)\Big]\nabla_tU_i^{n}
\nonumber\\+&a_1\tau\sum_{n=1}^{N}\sum_{i=1}^{M-1}h(\nabla_tU_i^{n})^2
+\tau\sum_{l=1}^{q}c_l\mu_{2,l}\sum_{n=1}^{N}\sum_{i=1}^{M-1}h\sum_{k=1}^{n}d_{n-k}^{(\alpha_l)} \nabla_tU_i^{k}\nabla_tU_i^{n}
+a_2\tau\sum_{n=1}^{N}\sum_{i=1}^{M-1}hU_i^n\nabla_tU_i^{n}\nonumber\\\label{e26}
=&a_3\tau\sum_{n=1}^{N}\sum_{i=1}^{M-1}h\delta_x^2U_i^n\nabla_tU_i^{n}
+a_4\mu_3\tau\sum_{n=1}^{N}\sum_{i=1}^{M-1}h\sum_{k=1}^{n}d_{n-k}^{(\beta)} \nabla_t(\delta_x^2U_i^{k})\nabla_tU_i^{n}
+\tau\sum_{n=1}^{N}\sum_{i=1}^{M-1}hf_i^n\nabla_tU_i^{n}.
\end{align}
Using Lemma \ref{lm2}, we have
\begin{align}
&\tau\sum_{j=1}^{s}b_j\mu_{1,j}\sum_{n=1}^{N}\sum_{i=1}^{M-1}h\Big[a_0^{(\gamma_j)}\nabla_tU_i^{n}-
\sum_{k=1}^{n-1}(a_{n-k-1}^{(\gamma_j)}-a_{n-k}^{(\gamma_j)})\nabla_tU_i^{k}-
a_{n-1}^{(\gamma_j)}\phi_2(x_i)\Big]\nabla_tU_i^{n}\nonumber\\
\geq& \sum_{j=1}^{s}b_j\bigg(\frac{\tau T^{1-\gamma_j}}{2\Gamma(2-\gamma_j)}\sum_{n=1}^{N}\sum_{i=1}^{M-1}h (\nabla_tU_i^{n})^2
-\frac{ T^{2-\gamma_j}}{2\Gamma(3-\gamma_j)}\sum_{i=1}^{M-1}h\phi_2(x_i)^2\bigg)\nonumber\\\label{e27}
=&\sum_{j=1}^{s}b_j\frac{\tau T^{1-\gamma_j}}{2\Gamma(2-\gamma_j)}\sum_{n=1}^{N}||\nabla_tU^{n}||_0^2
-\sum_{j=1}^{s}b_j\frac{T^{2-\gamma_j}}{2\Gamma(3-\gamma_j)}||\phi_2||_0^2.
\end{align}
For the second term, we have
\begin{align}\label{e28}
a_1\tau\sum_{n=1}^{N}\sum_{i=1}^{M-1}h(\nabla_tU_i^{n})^2=a_1\tau\sum_{n=1}^{N}||\nabla_tU^{n}||_0^2.
\end{align}
Using (\ref{e16}), we obtain
\begin{align}\label{e29}
&\tau\sum_{l=1}^{q}c_l\mu_{2,l}\sum_{n=1}^{N}\sum_{i=1}^{M-1}h\sum_{k=1}^{n}d_{n-k}^{(\alpha_l)} \nabla_tU_i^{k}\nabla_tU_i^{n}
=\tau\sum_{l=1}^{q}c_l\mu_{2,l}\sum_{n=1}^{N}\sum_{k=1}^{n}d_{n-k}^{(\alpha_l)}(\nabla_tU^{k},\nabla_tU^{n})\geq0.
\end{align}
Utilising the inequality $a(a-b)\geq \frac{1}{2}(a^2-b^2)$, we have
\begin{align}\label{e30}
&a_2\tau\sum_{n=1}^{N}\sum_{i=1}^{M-1}hU_i^n\nabla_tU_i^{n}=a_2\sum_{n=1}^{N}(U^n,U^n-U^{n-1})\nonumber\\
\geq&\frac{a_2}{2}\sum_{n=1}^{N}(||U^{n}||_0^2-||U^{n-1}||_0^2)=\frac{a_2}{2}(||U^{N}||_0^2-||U^{0}||_0^2).
\end{align}
Applying (\ref{e08}) and the inequality $a(a-b)\geq \frac{1}{2}(a^2-b^2)$ again, we obtain
\begin{align}
&a_3\tau\sum_{n=1}^{N}\sum_{i=1}^{M-1}h\delta_x^2U_i^n\nabla_tU_i^{n}=a_3\tau\sum_{n=1}^{N}(\delta_x^2U^n,\nabla_tU^{n})
=-a_3\sum_{n=1}^{N}
\langle\nabla_xU^n,\nabla_xU^n-\nabla_xU^{n-1}\rangle\nonumber\\\label{e31}
\leq&-\frac{a_3}{2}\sum_{n=1}^{N}(|U^{n}|_1^2-|U^{n-1}|_1^2)
=\frac{a_3}{2}(|U^{0}|_1^2-|U^{N}|_1^2).
\end{align}
Combining (\ref{e08}), (\ref{e17}) and Lemma \ref{lm4}, we have
\begin{align}
&a_4\mu_3\tau\sum_{n=1}^{N}\sum_{i=1}^{M-1}h\sum_{k=1}^{n}d_{n-k}^{(\beta)} \nabla_t(\delta_x^2U_i^{k})\nabla_tU_i^{n}
=a_4\mu_3\tau\sum_{n=1}^{N}\sum_{k=1}^{n}d_{n-k}^{(\beta)}\Big(\nabla_t(\delta_x^2U^{k}),\nabla_tU^{n}\Big)\nonumber\\\label{e32}
=&-a_4\mu_3\tau\sum_{n=1}^{N}\sum_{k=1}^{n}d_{n-k}^{(\beta)}
\Big\langle\nabla_t(\nabla_xU^{k}),\nabla_t(\nabla_xU^{n})\Big\rangle\leq0.
\end{align}
Using the important inequality $ab\leq \varepsilon a^2+\frac{b^2}{4\varepsilon} (\varepsilon>0)$, we have
\begin{align}
&\tau\sum_{n=1}^{N}\sum_{i=1}^{M-1}hf_i^n\nabla_tU_i^{n}
\leq \tau\varepsilon_0\sum_{n=1}^{N}\sum_{i=1}^{M-1}h (\nabla_tU_i^{n})^2+\frac{\tau}{4\varepsilon_0}
\sum_{n=1}^{N}\sum_{i=1}^{M-1}h(f_i^n)^2\nonumber\\\label{e33}
=&\tau\varepsilon_0\sum_{n=1}^{N}||\nabla_tU^{n}||_0^2+
\frac{\tau}{4\varepsilon_0}
\sum_{n=1}^{N}||f^n||_0^2\leq \tau\varepsilon_0\sum_{n=1}^{N}||\nabla_tU^{n}||_0^2+
\frac{T}{4\varepsilon_0}
\max\limits_{1\leq n\leq N}||f^n||_0^2,
\end{align}
where $\varepsilon_0=\sum\limits_{j=1}^{s}\frac{b_j T^{1-\gamma_j}}{2\Gamma(2-\gamma_j)}+a_1$.
Substituting  (\ref{e27})-(\ref{e33}) into (\ref{e26}), we have
\begin{align*}
&\tau\varepsilon_0\sum_{n=1}^{N}||\nabla_tU^{n}||_0^2
-\sum_{j=1}^{s}\frac{b_jT^{2-\gamma_j}}{2\Gamma(3-\gamma_j)}||\phi_2||_0^2
+\frac{a_2}{2}(||U^{N}||_0^2-||U^{0}||_0^2)\\
\leq& \frac{a_3}{2}(|U^{0}|_1^2-|U^{N}|_1^2)
+\tau\varepsilon_0\sum_{n=1}^{N}||\nabla_tU^{n}||_0^2+\frac{T}{4\varepsilon_0}
\max\limits_{1\leq n\leq N}||f^n||_0^2,
\end{align*}
then rearranging gives
\begin{align}
{a_2}||U^{N}||_0^2+{a_3}|U^{N}|_1^2
\leq {a_2}||U^{0}||_0^2+{a_3}|U^{0}|_1^2\label{e34}
+\sum_{j=1}^{s}\frac{b_jT^{2-\gamma_j}}{\Gamma(3-\gamma_j)}||\phi_2||_0^2+\frac{T}{2\varepsilon_0}
\max\limits_{1\leq n\leq N}||f^n||_0^2,
\end{align}
namely,
\begin{align*}
||U^{N}||_1^2
\leq ||U^{0}||_1^2+\sum_{j=1}^{s}\frac{b_jT^{2-\gamma_j}}{\Gamma(3-\gamma_j)}||\phi_2||_0^2+\frac{T}{2\varepsilon_0}
\max\limits_{1\leq n\leq N}||f^n||_0^2,
\end{align*}
which proves that the scheme (\ref{e22}) is unconditionally stable.
\end{proof}

\begin{thm}\label{thm3}
The implicit finite difference scheme (\ref{e25}) is unconditionally stable and it holds that
\begin{align*}
||\widetilde{U}^{N}||_1^2\leq ||\widetilde{U}^{0}||_1^2+\sum_{j=1}^{s}\frac{b_jT^{2-\gamma_j}}{\Gamma(3-\gamma_j)}||\phi_2||_0^2+\frac{T}{2\varepsilon_0}
\max\limits_{1\leq n\leq N}||f^{n-\frac{1}{2}}||_0^2,
\end{align*}
where $\varepsilon_0=\sum\limits_{j=1}^{s}\frac{b_j T^{1-\gamma_j}}{2\Gamma(2-\gamma_j)}+a_1$ and $\widetilde{U}^N=[U_{1}^{N},U_{2}^{N},\dots,U_{M-1}^{N}]^T$ is the solution vector of (\ref{e25}).
\end{thm}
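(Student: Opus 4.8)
The plan is to rerun the discrete energy argument of Theorem~\ref{thm2}, but now testing the half-level scheme (\ref{e25}) against the backward difference quotient $\nabla_t U_i^n$. Concretely, I would multiply (\ref{e25}) by $h\tau\,\nabla_t U_i^n$ and sum over $i=1,\dots,M-1$ and $n=1,\dots,N$, so that the resulting identity splits into one contribution per term of the scheme, exactly as in (\ref{e26}).

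The multi-term $\gamma_j$ part is handled verbatim by Lemma~\ref{lm2}, yielding the same lower bound as (\ref{e27}), namely $\sum_j b_j\frac{\tau T^{1-\gamma_j}}{2\Gamma(2-\gamma_j)}\sum_n\|\nabla_t U^n\|_0^2-\sum_j b_j\frac{T^{2-\gamma_j}}{2\Gamma(3-\gamma_j)}\|\phi_2\|_0^2$, while the $a_1$ term contributes exactly $a_1\tau\sum_n\|\nabla_t U^n\|_0^2$. For the symmetrized $\alpha_l$ sums I would invoke inequality (\ref{e17}) of Lemma~\ref{lm5} with $v^k=\nabla_t U_i^k$ for fixed $i$ and then sum in $i$, which shows this contribution is nonnegative. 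The reaction term is in fact cleaner here than in Scheme~I: since $2\,U_i^{n-\frac12}\,\nabla_t U_i^n=\tau^{-1}\big((U_i^n)^2-(U_i^{n-1})^2\big)$, it telescopes to the exact identity $\frac{a_2}{2}\big(\|U^N\|_0^2-\|U^0\|_0^2\big)$; likewise, using (\ref{e08}) together with $2\langle\nabla_x U^{n-\frac12},\nabla_x U^n-\nabla_x U^{n-1}\rangle=|U^n|_1^2-|U^{n-1}|_1^2$, the term $a_3\tau\sum_n(\delta_x^2 U^{n-\frac12},\nabla_t U^n)$ telescopes to $\frac{a_3}{2}\big(|U^0|_1^2-|U^N|_1^2\big)$. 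Finally the forcing term is estimated by $ab\le\varepsilon_0 a^2+\frac{b^2}{4\varepsilon_0}$ as in (\ref{e33}), producing $\tau\varepsilon_0\sum_n\|\nabla_t U^n\|_0^2+\frac{T}{4\varepsilon_0}\max_n\|f^{n-\frac12}\|_0^2$.

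The only genuinely delicate step is the coupled operator $\frac{a_4\mu_3}{2}\big[\sum_{k=1}^n d_{n-k}^{(\beta)}\nabla_t(\delta_x^2 U_i^k)+\sum_{k=1}^{n-1}d_{n-1-k}^{(\beta)}\nabla_t(\delta_x^2 U_i^k)\big]$. Here I would first apply the discrete integration-by-parts of Lemma~\ref{lm4} to each of the two inner convolution sums, converting every $\big(\nabla_t(\delta_x^2 U^k),\nabla_t U^n\big)$ into $-\big\langle\nabla_t(\nabla_x U^k),\nabla_t(\nabla_x U^n)\big\rangle$; the whole contribution then equals $-\frac{a_4\mu_3\tau}{2}$ times the quadratic form $\sum_{n=1}^N\sum_{k=1}^n d_{n-k}^{(\beta)}\langle\nabla_t\nabla_x U^k,\nabla_t\nabla_x U^n\rangle+\sum_{n=1}^N\sum_{k=1}^{n-1}d_{n-1-k}^{(\beta)}\langle\nabla_t\nabla_x U^k,\nabla_t\nabla_x U^n\rangle$ evaluated on the sequence $\{\nabla_x U^k\}_k$. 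Since $\nabla_t\nabla_x U^n$ is precisely the difference quotient of that sequence, inequality (\ref{e17}) of Lemma~\ref{lm5}, applied to each spatial component of the gradient and summed over the grid, shows this form is nonnegative, so the coupled term contributes something $\le0$. I expect this to be the main obstacle, chiefly because one must keep careful track that (\ref{e17}) is a scalar statement and therefore has to be used componentwise on $\nabla_x U^k$ rather than directly on the inner-product sequence.

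Assembling the pieces into the summed identity, the two occurrences of $\tau\varepsilon_0\sum_n\|\nabla_t U^n\|_0^2$ cancel by the definition $\varepsilon_0=\sum_j\frac{b_j T^{1-\gamma_j}}{2\Gamma(2-\gamma_j)}+a_1$, and after multiplying through by $2$ and moving $\|U^N\|_0^2$ and $|U^N|_1^2$ to the left I would obtain $a_2\|U^N\|_0^2+a_3|U^N|_1^2\le a_2\|U^0\|_0^2+a_3|U^0|_1^2+\sum_j\frac{b_j T^{2-\gamma_j}}{\Gamma(3-\gamma_j)}\|\phi_2\|_0^2+\frac{T}{2\varepsilon_0}\max_n\|f^{n-\frac12}\|_0^2$, which is exactly $\|\widetilde U^N\|_1^2\le\|\widetilde U^0\|_1^2+\sum_j\frac{b_j T^{2-\gamma_j}}{\Gamma(3-\gamma_j)}\|\phi_2\|_0^2+\frac{T}{2\varepsilon_0}\max_n\|f^{n-\frac12}\|_0^2$; since the right-hand side does not depend on $N$, unconditional stability follows.
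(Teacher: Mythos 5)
Your proposal is correct and follows essentially the same route as the paper: testing (\ref{e25}) against $h\tau\,\nabla_tU_i^n$, reusing (\ref{e27})--(\ref{e28}) for the $\gamma_j$ and $a_1$ terms, invoking (\ref{e17}) for the symmetrized $\alpha_l$ sums, telescoping the $a_2$ and $a_3$ terms exactly as in (\ref{e37})--(\ref{e38}), combining Lemma~\ref{lm4} with (\ref{e17}) for the coupled operator as in (\ref{e39}), and closing with the $\varepsilon$-inequality (\ref{e40}). Your remark that (\ref{e17}) must be applied componentwise to $\nabla_xU^k$ before summing over the grid is a point the paper glosses over, but it is the same argument.
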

\begin{proof}

Multiplying Eq.(\ref{e25}) by $h\tau \nabla_tU_i^{n}$ and summing $i$ from 1 to $M-1$ and $n$ from 1 to $N$, we obtain
\begin{align}
&\tau\sum_{j=1}^{s}b_j\mu_{1,j}\sum_{n=1}^{N}\sum_{i=1}^{M-1}h\Big[a_0^{(\gamma_j)}\nabla_tU_i^{n}-
\sum_{k=1}^{n-1}(a_{n-k-1}^{(\gamma_j)}-a_{n-k}^{(\gamma_j)})\nabla_tU_i^{k}-
a_{n-1}^{(\gamma_j)}\phi_2(x_i)\Big]\nabla_tU_i^{n}+a_1\tau\sum_{n=1}^{N}\sum_{i=1}^{M-1}h(\nabla_tU_i^{n})^2
\nonumber\\
+&\tau\sum_{l=1}^{q}\frac{c_l\mu_{2,l}}{2}\sum_{n=1}^{N}\sum_{i=1}^{M-1}h\Big[
\sum_{k=1}^{n}d_{n-k}^{(\alpha_l)} \nabla_tU_i^{k}+\sum_{k=1}^{n-1}d_{n-1-k}^{(\alpha_l)} \nabla_tU_i^{k}\Big]\nabla_tU_i^{n}
+a_2\tau\sum_{n=1}^{N}\sum_{i=1}^{M-1}hU_i^{n-\frac{1}{2}}\nabla_tU_i^{n}\nonumber\\
=&a_3\tau\sum_{n=1}^{N}\sum_{i=1}^{M-1}h\delta_x^2U_i^{n-\frac{1}{2}}\nabla_tU_i^{n}
+\frac{a_4\mu_3\tau}{2}\sum_{n=1}^{N}\sum_{i=1}^{M-1}h\Big[
\sum_{k=1}^{n}d_{n-k}^{(\beta)} \nabla_t(\delta_x^2U_i^{k})+
\sum_{k=1}^{n-1}d_{n-1-k}^{(\beta)} \nabla_t(\delta_x^2U_i^{k})\Big]\nabla_tU_i^{n}\nonumber\\\label{e35}
+&\tau\sum_{n=1}^{N}\sum_{i=1}^{M-1}hf_i^{n-\frac{1}{2}}\nabla_tU_i^{n}.
\end{align}
Using (\ref{e17}), we obtain
\begin{align}
&\tau\sum_{l=1}^{q}\frac{c_l\mu_{2,l}}{2}\sum_{n=1}^{N}\sum_{i=1}^{M-1}h\Big[
\sum_{k=1}^{n}d_{n-k}^{(\alpha_l)} \nabla_tU_i^{k}+\sum_{k=1}^{n-1}d_{n-1-k}^{(\alpha_l)} \nabla_tU_i^{k}\Big]\nabla_tU_i^{n}\nonumber\\\label{e36}
=&\tau\sum_{l=1}^{q}\frac{c_l\mu_{2,l}}{2}\Big[\sum_{n=1}^{N}
\sum_{k=1}^{n}d_{n-k}^{(\alpha_l)} \Big(\nabla_t\widetilde{U}^{k},\nabla_t\widetilde{U}^{n}\Big)+\sum_{n=1}^{N}\sum_{k=1}^{n-1}d_{n-1-k}^{(\alpha_l)} \Big(\nabla_t\widetilde{U}^{k},\nabla_t\widetilde{U}^{n}\Big)\Big]\geq0.
\end{align}
For the fourth term, we have
\begin{align}
&a_2\tau\sum_{n=1}^{N}\sum_{i=1}^{M-1}hU_i^{n-\frac{1}{2}}\nabla_tU_i^{n}
=\frac{a_2}{2}\sum_{n=1}^{N}(\widetilde{U}^n+\widetilde{U}^{n-1},\widetilde{U}^n-\widetilde{U}^{n-1})\nonumber\\\label{e37}
=&\frac{a_2}{2}\sum_{n=1}^{N}(||\widetilde{U}^{n}||_0^2-||\widetilde{U}^{n-1}||_0^2)=\frac{a_2}{2}(||\widetilde{U}^{N}||_0^2-||\widetilde{U}^{0}||_0^2).
\end{align}
For the fifth term, we obtain
\begin{align}\label{e38}
&a_3\tau\sum_{n=1}^{N}\sum_{i=1}^{M-1}h\delta_x^2U_i^{n-\frac{1}{2}}\nabla_tU_i^{n}
=a_3\tau\sum_{n=1}^{N}(\delta_x^2\widetilde{U}^{n-\frac{1}{2}},\nabla_t\widetilde{U}^{n})\nonumber\\
=&-\frac{a_3}{2}\sum_{n=1}^{N}(|\widetilde{U}^{n}|_1^2-|\widetilde{U}^{n-1}|_1^2)
=\frac{a_3}{2}(|\widetilde{U}^{0}|_1^2-|\widetilde{U}^{N}|_1^2).
\end{align}
Combining (\ref{e08}), (\ref{e17}) and Lemma \ref{lm4}, we have
\begin{align}
&\frac{a_4\mu_3\tau}{2}\sum_{n=1}^{N}\sum_{i=1}^{M-1}h\Big[
\sum_{k=1}^{n}d_{n-k}^{(\beta)} \nabla_t(\delta_x^2U_i^{k})+
\sum_{k=1}^{n-1}d_{n-1-k}^{(\beta)} \nabla_t(\delta_x^2U_i^{k})\Big]\nabla_tU_i^{n}\nonumber\\
=&\frac{a_4\mu_3\tau}{2}\sum_{n=1}^{N}\Big[
\sum_{k=1}^{n}d_{n-k}^{(\beta)} \Big(\nabla_t(\delta_x^2\widetilde{U}^{k}),\nabla_t(\widetilde{U}^{n})\Big)+
\sum_{k=1}^{n-1}d_{n-1-k}^{(\beta)}  \Big(\nabla_t(\delta_x^2\widetilde{U}^{k}),\nabla_t(\widetilde{U}^{n})\Big)\Big]\nonumber\\\label{e39}
=& -\frac{a_4\mu_3\tau}{2}\Big[\sum_{n=1}^{N}\sum_{k=1}^{n}d_{n-k}^{(\beta)}
\Big\langle\nabla_t(\nabla_x\widetilde{U}^{k}),\nabla_t(\nabla_x\widetilde{U}^{n})\Big\rangle
+\sum_{n=1}^{N}\sum_{k=1}^{n-1}d_{n-1-k}^{(\beta)}
\Big\langle\nabla_t(\nabla_x\widetilde{U}^{k}),\nabla_t(\nabla_x\widetilde{U}^{n})\Big\rangle\Big]\leq0.
\end{align}
Using the important inequality $ab\leq \varepsilon a^2+\frac{b^2}{4\varepsilon} (\varepsilon>0)$, we have
\begin{align}
\tau\sum_{n=1}^{N}\sum_{i=1}^{M-1}hf_i^{n-\frac{1}{2}}\nabla_tU_i^{n}
&\leq \tau\varepsilon_0\sum_{n=1}^{N}||\nabla_t\widetilde{U}^{n}||_0^2+
\frac{\tau}{4\varepsilon_0}
\sum_{n=1}^{N}||f^{n-\frac{1}{2}}||_0^2\nonumber\\\label{e40}
&\leq \tau\varepsilon_0\sum_{n=1}^{N}||\nabla_t\widetilde{U}^{n}||_0^2+
\frac{T}{4\varepsilon_0}\max\limits_{1\leq n\leq N}||f^{n-\frac{1}{2}}||_0^2,
\end{align}
where $\varepsilon_0=\sum\limits_{j=1}^{s}\frac{b_j T^{1-\gamma_j}}{2\Gamma(2-\gamma_j)}+a_1$.
Substituting  (\ref{e27}), (\ref{e28}) and (\ref{e36})-(\ref{e40}) into (\ref{e35}), we have
\begin{align*}
&\tau\varepsilon_0\sum_{n=1}^{N}||\nabla_t\widetilde{U}^{n}||_0^2
-\sum_{j=1}^{s}\frac{b_jT^{2-\gamma_j}}{2\Gamma(3-\gamma_j)}||\phi_2||_0^2
+\frac{a_2}{2}(||\widetilde{U}^{N}||_0^2-||\widetilde{U}^{0}||_0^2)\\
\leq& \frac{a_3}{2}(|\widetilde{U}^{0}|_1^2-|\widetilde{U}^{N}|_1^2)+\tau\varepsilon_0\sum_{n=1}^{N}||\nabla_t\widetilde{U}^{n}||_0^2
+\frac{T}{4\varepsilon_0}\max\limits_{1\leq n\leq N}||f^{n-\frac{1}{2}}||_0^2,
\end{align*}
then we have
\begin{align*}
{a_2}||\widetilde{U}^{N}||_0^2+{a_3}|\widetilde{U}^{N}|_1^2
\leq {a_2}||\widetilde{U}^{0}||_0^2+{a_3}|\widetilde{U}^{0}|_1^2\label{e39}
+\sum_{j=1}^{s}\frac{b_jT^{2-\gamma_j}}{\Gamma(3-\gamma_j)}||\phi_2||_0^2+\frac{T}{2\varepsilon_0}\max\limits_{1\leq n\leq N}||f^{n-\frac{1}{2}}||_0^2,
\end{align*}
namely,
\begin{align*}
||\widetilde{U}^{N}||_1^2\leq ||\widetilde{U}^{0}||_1^2+\sum_{j=1}^{s}\frac{b_jT^{2-\gamma_j}}{\Gamma(3-\gamma_j)}||\phi_2||_0^2+\frac{T}{2\varepsilon_0}
\max\limits_{1\leq n\leq N}||f^{n-\frac{1}{2}}||_0^2,
\end{align*}
which proves that the scheme (\ref{e25}) is unconditionally stable.
\end{proof}

\subsection{Convergence}

Now we discuss the convergence of the schemes (\ref{e22}) and (\ref{e25}).
\begin{thm}\label{thm4}
Suppose that the solution of problem (\ref{e04})-(\ref{e06}) satisfies $u(x,t)\in C_{x,t}^{4,3}(\Omega)$. Define ${{u}}^n=[u_{1}^{n},u_{2}^{n},$ $\ldots,u_{M-1}^{n}]^T$ as the exact solution vector, $U^n=[U_1^n,U_2^n,\ldots,U_{M-1}^n]^T$ as the numerical solution vector of (\ref{e22}), and $\widetilde{U}^n=[U_1^n,U_2^n,\ldots,U_{M-1}^n]^T$ as the numerical solution vector of (\ref{e25}), respectively.  Then there exists two positive constants $C_1$ and $C_2$ independent of $h$ and $\tau$ such that
\begin{align*}
||{ {u}}^n-U^n||_1&\leq C_1 \sqrt{\frac{TL }{2\varepsilon_0}}(\tau+h^2),\\
||{ {u}}^n-\widetilde{U}^n||_1&\leq C_2\sqrt{\frac{TL }{2\varepsilon_0}}(\tau^{\min\{3-\gamma_s,2-\alpha_q,2-\beta\}}+h^2),
\end{align*}
where $\varepsilon_0=\sum\limits_{j=1}^{s}\frac{b_j T^{1-\gamma_j}}{2\Gamma(2-\gamma_j)}+a_1$.
\end{thm}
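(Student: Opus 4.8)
The plan is to derive an error equation by subtracting the numerical scheme from the consistency relation satisfied by the exact solution, and then to apply the stability estimate (Theorems~\ref{thm2} and~\ref{thm3}) to the error function with the truncation term playing the role of the source $f$. Concretely, for scheme~I set $e_i^n = u_i^n - U_i^n$. Subtracting (\ref{e22}) from (\ref{e21}) and using the boundary and initial data ($e_i^0 = 0$, and the same $\phi_2(x_i)$ appearing in both), we see that $e^n$ satisfies exactly the same finite difference scheme (\ref{e22}) but with right-hand side $R_{1,i}^n$ in place of $f_i^n$ and with homogeneous initial/boundary conditions and $\phi_2 \equiv 0$ in the error dynamics. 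Hence $|e^n|_1$ and $\|e^n\|_0$ obey the energy identity used in the proof of Theorem~\ref{thm2}, giving
\begin{align*}
\|e^N\|_1^2 \leq \|e^0\|_1^2 + \sum_{j=1}^s \frac{b_j T^{2-\gamma_j}}{\Gamma(3-\gamma_j)}\|0\|_0^2 + \frac{T}{2\varepsilon_0}\max_{1\leq n\leq N}\|R_1^n\|_0^2.
\end{align*}

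Next I would estimate $\|R_1^n\|_0^2$. Since $e^0 = 0$ we have $\|e^0\|_1 = 0$, so the first two terms vanish. Using $|R_{1,i}^n| \leq C_1(\tau + h^2)$ uniformly in $i$ and $n$ (this bound was recorded right after (\ref{e21})), we get $\|R_1^n\|_0^2 = h\sum_{i=1}^{M-1}(R_{1,i}^n)^2 \leq h(M-1)C_1^2(\tau+h^2)^2 \leq L\,C_1^2(\tau+h^2)^2$, because $hM = L$. Substituting,
\begin{align*}
\|u^N - U^N\|_1^2 \leq \frac{TL\,C_1^2}{2\varepsilon_0}(\tau+h^2)^2,
\end{align*}
and taking square roots gives the claimed bound $\|u^n - U^n\|_1 \leq C_1\sqrt{TL/(2\varepsilon_0)}\,(\tau+h^2)$ (the estimate holds at every time level $N=n$, so one may rename $N$ as $n$). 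The argument for scheme~II is identical in structure: set $\tilde e_i^n = u_i^n - \widetilde U_i^n$, subtract (\ref{e25}) from (\ref{e24}), obtain that $\tilde e^n$ satisfies scheme (\ref{e25}) with source $R_{2,i}^n$ (where $|R_{2,i}^n|\leq C_2(\tau^{\min\{3-\gamma_s,2-\alpha_q,2-\beta\}}+h^2)$) and homogeneous data, apply Theorem~\ref{thm3}, and bound $\|R_2^{n}\|_0^2 \leq L\,C_2^2(\tau^{\min\{3-\gamma_s,2-\alpha_q,2-\beta\}}+h^2)^2$.

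The only genuinely delicate point is verifying that the error function really does satisfy the \emph{same} scheme with \emph{homogeneous} auxiliary data, so that the stability theorems apply verbatim; this hinges on the fact that the terms $a_{n-1}^{(\gamma_j)}\phi_2(x_i)$ and the initial value $U_i^0 = \phi_1(x_i)$ are common to both the exact-solution relation and the numerical scheme and therefore cancel in the subtraction, leaving $e^0 = 0$ and no residual $\phi_2$ contribution. Once that observation is in place, the remainder is the routine bookkeeping above: reuse the energy estimate, plug in the uniform truncation bounds, use $hM = L$ to convert the discrete $\ell^2$ sum into a factor of $L$, and take square roots. I expect no further obstacle, since the schemes are linear and the stability proofs were already carried out in full generality for an arbitrary right-hand side.
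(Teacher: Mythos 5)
Your proposal is correct and follows essentially the same route as the paper: subtract the scheme from the consistency relation so that the $\phi_2$ and initial terms cancel, observe that the error satisfies the same scheme with the truncation error as source and homogeneous data, and then reuse the energy/stability estimate together with the uniform bound $|R_{1,i}^n|\leq C_1(\tau+h^2)$ and $hM=L$ to obtain the stated $H^1$ bound. The paper simply re-invokes the inequalities (\ref{e33})--(\ref{e34}) directly rather than citing Theorem~\ref{thm2} by name, but the argument is identical.
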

\begin{proof}
Denote $e_i^n=u_{i}^{n}-U_i^n$, $e^n=[e_1^n,e_2^n,\ldots,e_{M-1}^n]^T$. Subtracting (\ref{e22}) from (\ref{e21}), we have
\begin{align*}
&\sum_{j=1}^{s}b_j\mu_{1,j}\Big[a_0^{(\gamma_j)}\nabla_te_{i}^{n}-
\sum_{k=1}^{n-1}(a_{n-k-1}^{(\gamma_j)}-a_{n-k}^{(\gamma_j)})\nabla_te_{i}^{k}
\Big]\nonumber\\
+&a_1\nabla_te_{i}^{n}+\sum_{l=1}^{q}c_l\mu_{2,l}
\sum_{k=1}^{n}d_{n-k}^{(\alpha_l)} \nabla_te_i^{k}
+a_2e_{i}^{n}\nonumber\\
=&a_3\delta_x^2e_{i}^{n}+a_4\mu_3
\sum_{k=1}^{n}d_{n-k}^{(\beta)} \nabla_t(\delta_x^2e_i^{k})+R_{1,i}^n,
\end{align*}
with $e_i^0=0$, $e_0^n=e_M^n=0$.
Then from (\ref{e33}) and (\ref{e34}), we have
 \begin{align*}
{a_2}||e^{n}||_0^2+{a_3}|e^{n}|_1^2\leq \frac{\tau h}{2\varepsilon_0}
\sum_{k=1}^{n}\sum_{i=1}^{M-1}(R_{1,i}^n)^2\leq \frac{\tau h}{2\varepsilon_0}
\sum_{k=1}^{n}\sum_{i=1}^{M-1}C_1^2(\tau+h^2)^2
\leq \frac{C_1^2TL }{2\varepsilon_0}
(\tau+h^2)^2,
\end{align*}
namely,
\begin{align*}
||{u}^n-U^n||_1^2&\leq \frac{C_1^2TL }{2\varepsilon_0}(\tau+h^2)^2.
\end{align*}
Similarly, we can obtain
\begin{align*}
||{{u}}^n-\widetilde{U}^n||_1^2&\leq \frac{C_2^2TL }{2\varepsilon_0}(\tau^{\min\{3-\gamma_s,2-\alpha_q,2-\beta\}}+h^2)^2.
\end{align*}
\end{proof}

\section{Numerical examples}

\textbf{Example 1}\quad We consider the following multi-term time fractional viscoelastic non-Newtonian fluid model.
\begin{eqnarray*}
   \begin{cases}
   ~{D_t^\gamma} u(x,t)+\frac{\partial u(x,t)}{\partial {t}}+{D_t^\alpha} u(x,t)+u(x,t)=\frac{\partial^2 u(x,t)}{\partial {x^2}}+{D_t^\beta}\frac{\partial^2 u(x,t)}{\partial {x^2}}+f(x,t),~(x,t)\in(0,1)\times(0,1], \\[+2pt]
   ~u(x,0)=\sin\pi x,\quad u_t(x,0)=0,\quad 0\le x\le 1,\\[+1pt]
   ~u(0,t)=0,\quad u(1,t)=0,\quad\quad 0\le t\le 1,
\end{cases}
\end{eqnarray*}
where $0<\alpha, \beta<1$, $1<\gamma<2$,
\begin{eqnarray*}
f(x,t)=\sin\pi x\Big[\frac{\Gamma(4)t^{3-\gamma}}{\Gamma(4-\gamma)}+3t^2
+\frac{\Gamma(4)t^{3-\alpha}}{\Gamma(4-\alpha)}+(1+\pi^2)(t^3+1)+
\frac{\pi^2\Gamma(4)t^{3-\beta}}{\Gamma(4-\beta)}\Big]
\end{eqnarray*}
and the exact solution is $u(x,t)=(t^3+1)\sin\pi x$.

Firstly, we use the implicit finite difference scheme (\ref{e22}) (Scheme I) to solve the equation and the numerical results are given in Table 1. The table lists the $L_2$ error and $L_{\infty}$ error and the convergence order of $\tau$ for different $\alpha$, $\beta$, $\gamma$ with $h=1/1000$ at $t=1$. We can see that the numerical results are in perfect agreement with the exact solution and the convergence order reaches the expected first order. Then we apply the mixed L scheme (\ref{e25}) (Scheme II) to the equation. Table 2 displays the $L_2$ error and $L_{\infty}$ error and convergence order of $\tau$ for different $\alpha$, $\beta$, $\gamma$ with $h=1/1000$ at $t=1$. We can observe that the numerical results are in excellent agreement with the exact solution and the convergence order attains the expected $\min\{3-\gamma,2-\alpha,2-\beta\}$ order.  Compared to scheme I, the results of scheme II are more accurate. In addition, we present a comparison of CPU time for two schemes in Table 3. Here the numerical computations were carried out using MATLAB R2014b on a Dell desktop with configuration: Intel(R) Core(TM) i7-4790, 3.60 GHz and 16.0 GB RAM. We choose $\alpha=0.7,~\beta=0.6,~\gamma=1.5$ and $h=1/1000$ at $t=1$ to observe the running time for different $\tau$. We observe that the running time of Scheme II is more than that of scheme I, which dues to the fact that more terms are added in Scheme II. 
 \begin{table}[htbp]
\caption{The temporal error and convergence of Scheme I for different $\alpha$,
$\beta$ and $\gamma$ with $h=1/1000$.} \centering
\begin{tabular}{c  c c c c}
 \toprule
$\alpha=0.7,~\beta=0.6,~\gamma=1.5$      & $ ||E(h,\tau)||_0$ &  Order  &  $ ||E(h,\tau)||_{\infty}$  &  Order    \\
\midrule
$1/40$  & 7.0478E-03     &        &  9.9671E-03      &          \\
$1/80$  & 3.2211E-03     &  1.13  &  4.5553E-03      &  1.13    \\
$1/160$ & 1.4899E-03     &  1.11  &  2.1071E-03      &  1.11    \\
$1/320$ & 6.9792E-04     &  1.09  &  9.8700E-04      &  1.09    \\
$1/640$ & 3.3092E-04     &  1.08  &  4.6799E-04      &  1.08    \\
\midrule
$\alpha=0.7,~\beta=0.8,~\gamma=1.6$         & $ ||E(h,\tau)||_0$ &  Order  &  $ ||E(h,\tau)||_{\infty}$  &  Order   \\
\midrule
$1/40$  & 1.0895E-02     &        &  1.5408E-02      &          \\
$1/80$  & 5.0166E-03     &  1.12  &  7.0946E-03      &  1.12    \\
$1/160$ & 2.3164E-03     &  1.11  &  3.2759E-03      &  1.11    \\
$1/320$ & 1.0742E-03     &  1.11  &  1.5191E-03      &  1.11    \\
$1/640$ & 5.0073E-04     &  1.10  &  7.0814E-04      &  1.10    \\
 \midrule
$\alpha=0.5,~\beta=0.3,~\gamma=1.6$ & $ ||E(h,\tau)||_0$ &  Order  &  $ ||E(h,\tau)||_{\infty}$  &  Order    \\
\midrule
$1/40$  & 5.5522E-03     &        &  7.8520E-03      &          \\
$1/80$  & 2.6575E-03     &  1.06  &  3.7583E-03      &  1.06    \\
$1/160$ & 1.2862E-03     &  1.05  &  1.8190E-03      &  1.05    \\
$1/320$ & 6.2820E-04     &  1.03  &  8.8841E-04      &  1.03    \\
$1/640$ & 3.0906E-04     &  1.02  &  4.3707E-04      &  1.02    \\
\bottomrule
\end{tabular}
\end{table}

\begin{table}[htbp]
\caption{The temporal error and convergence of Scheme II for different $\alpha$,
$\beta$ and $\gamma$ with $h=1/1000$.} \centering
\begin{tabular}{c  c c c c}
 \toprule
$\alpha=0.7,~\beta=0.6,~\gamma=1.5$      & $ ||E(h,\tau)||_0$ &  Order  &  $ ||E(h,\tau)||_{\infty}$  &  Order    \\
\midrule
$1/40$  & 2.8002E-03     &        &  3.9601E-03      &          \\
$1/80$  & 1.0828E-03     &  1.37  &  1.5313E-03      &  1.37    \\
$1/160$ & 4.1712E-04     &  1.38  &  5.8989E-04      &  1.38    \\
$1/320$ & 1.6057E-04     &  1.38  &  2.2708E-04      &  1.38    \\
$1/640$ & 6.2009E-05     &  1.37  &  8.7694E-05      &  1.37    \\
\midrule
$\alpha=0.7,~\beta=0.8,~\gamma=1.6$         & $ ||E(h,\tau)||_0$ &  Order  &  $ ||E(h,\tau)||_{\infty}$  &  Order   \\
\midrule
$1/40$  & 6.7356E-03     &        &  9.5256E-03      &          \\
$1/80$  & 2.9253E-03     &  1.20  &  4.1370E-03      &  1.20    \\
$1/160$ & 1.2677E-03     &  1.21  &  1.7928E-03      &  1.21    \\
$1/320$ & 5.4905E-04     &  1.21  &  7.7648E-04      &  1.21    \\
$1/640$ & 2.3795E-04     &  1.21  &  3.3652E-04      &  1.21    \\
 \midrule
$\alpha=0.5,~\beta=0.3,~\gamma=1.6$ & $ ||E(h,\tau)||_0$ &  Order  &  $ ||E(h,\tau)||_{\infty}$  &  Order    \\
\midrule
$1/40$  & 8.8331E-04     &        &  1.2492E-03      &          \\
$1/80$  & 3.0948E-04     &  1.51  &  4.3767E-04      &  1.51    \\
$1/160$ & 1.0881E-04     &  1.51  &  1.5388E-04      &  1.51    \\
$1/320$ & 3.8658E-05     &  1.49  &  5.4671E-05      &  1.49    \\
$1/640$ & 1.4074E-05     &  1.46  &  1.9904E-05      &  1.46    \\
\bottomrule
\end{tabular}
\end{table}

\begin{table}[htbp]
\caption{The running time of Scheme I and II for different $\tau$,
with $\alpha=0.7,~\beta=0.6,~\gamma=1.5$, $h=1/1000$ at $t=1$.} \centering
\begin{tabular}{c  c c }
 \toprule
$\tau$    &  Scheme I &  Scheme II     \\
\midrule
$1/40$    & 2.9279s     &  3.5410s         \\
$1/80$    & 9.5444s     &  10.6752s        \\
$1/160$   & 33.2557s    &  35.8488s        \\
$1/320$   & 123.4362s   &  130.4436s       \\
$1/640$   & 475.8209s   &  492.6424s       \\
\bottomrule
\end{tabular}
\end{table}


\textbf{Example 2}\quad Next, we consider the following unsteady MHD Couette flow of a generalized Oldroyd-B fluid \cite{a12}.
\begin{eqnarray*}
   \begin{cases}
   ~(1+\lambda^\alpha{D}_t^{\alpha})\frac{\partial u(x,t)}{\partial t}=(1+\theta^\beta{D}_t^\beta)\frac{\partial^2u(x,t)}{\partial x^2}-K(1+\lambda^\alpha{D}_t^{\alpha})u(x,t),~(x,t)\in(0,1)\times(0,2] \\[+1pt]
   ~u(x,0)=0,\quad u_t(x,0)=0,\quad 0\le x\le 1,\\
   ~u(0,t)=0,\quad u(1,t)=2t^p,\quad\quad 0\le t\le 2,
\end{cases}
\end{eqnarray*}
where $0<\alpha,\beta<1$, $\lambda$ is the relaxation time, $\theta$ is the retardation time, $K=\frac{\sigma B_0^2}{\rho}$, $\rho$ is the density of the fluid, $B_0$ is the magnetic intensity and $\sigma$ is the electrical conductivity, and $\lambda,\theta,K\geq 0$.

This model describes the flow of an incompressible Olyroyd-B fluid bounded by two infinite parallel rigid plates in a magnetic field. Initially, the whole system is at rest and the lower plate is fixed. Then at time $t=0^+$, the upper plate starts to slide with some velocity $2t^p$ in the main flow direction, which is termed as the plane Couette flow. Due to the influence of shear, the fluid is gradually in motion.

In the calculations, we choose $h=1/1000$, $\tau=1/100$. In order to observe the effects of different physical parameters on the velocity field, we plot some figures to demonstrate the dynamic characteristics of the generalized Oldroyd-B fluid. Fig. 2 shows the effects of power-law index $p$ and constant $K$ on the velocity. We can see that the velocity increases with increasing power-law index $p$ and the magnetic body force is favorable to the decays of the velocity. We can clearly find that the greater $K$ is, the more rapidly the velocity decays. Fig. 3 exhibits the effects of the relaxation time $\lambda$ and the retardation time $\theta$ on the velocity. We can see that the smaller the $\lambda$, the more slowly the velocity decays. However, an opposite trend for the variation of $\theta$ can be seen. Fig. 4 illustrates the change of the velocity with different parameters $\alpha$ and $\beta$. It is seen that the larger the value of $\alpha$, the more rapidly the velocity decays. The effect of $\beta$ is contrary to that of $\alpha$. Fig. 5 depicts the influence of time on the velocity and we can observe that the flow velocity increases with increasing time.
\begin{figure}[htbp]
\centering
\scalebox{0.4}[0.4]{\includegraphics{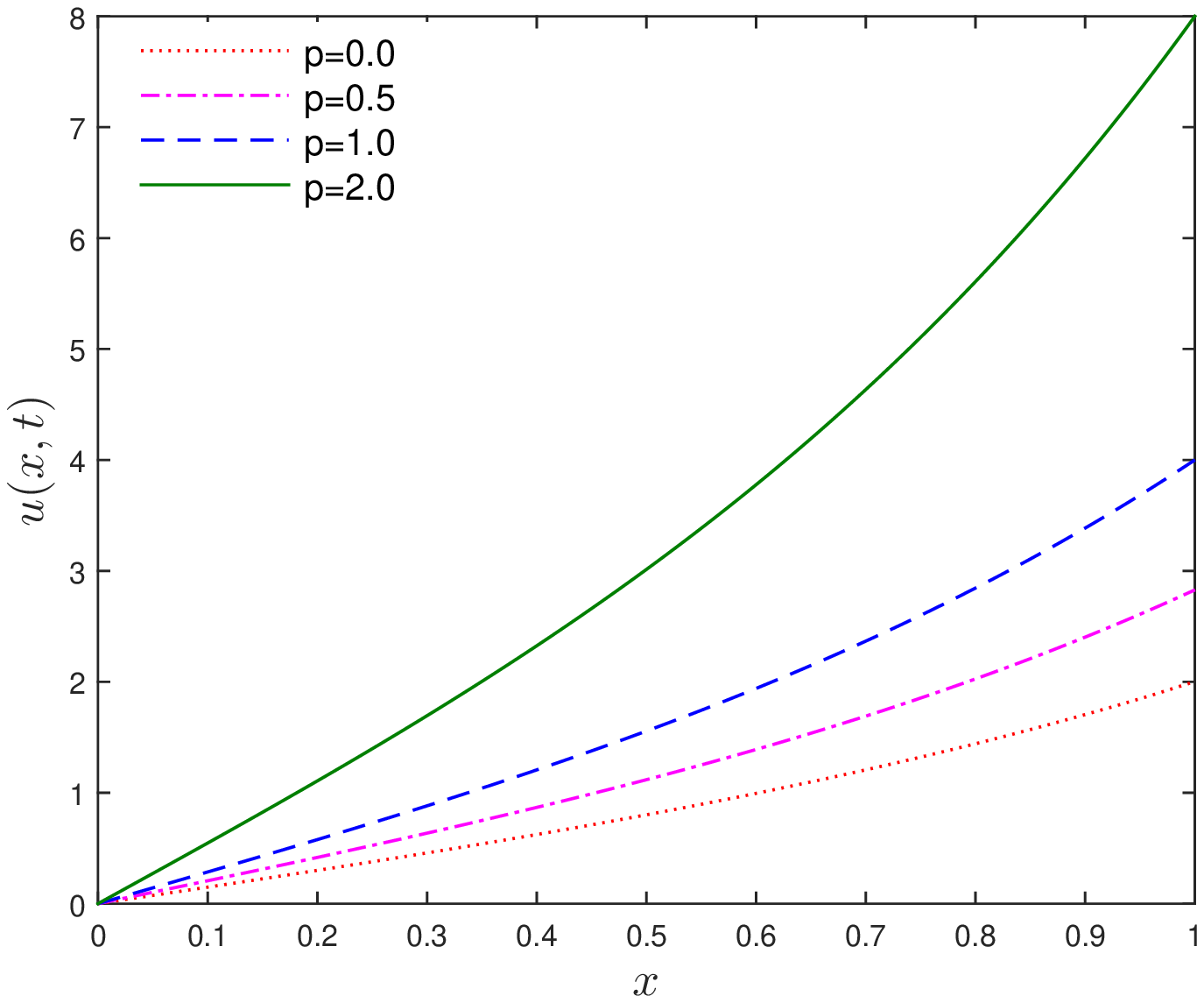}}
\scalebox{0.4}[0.4]{\includegraphics{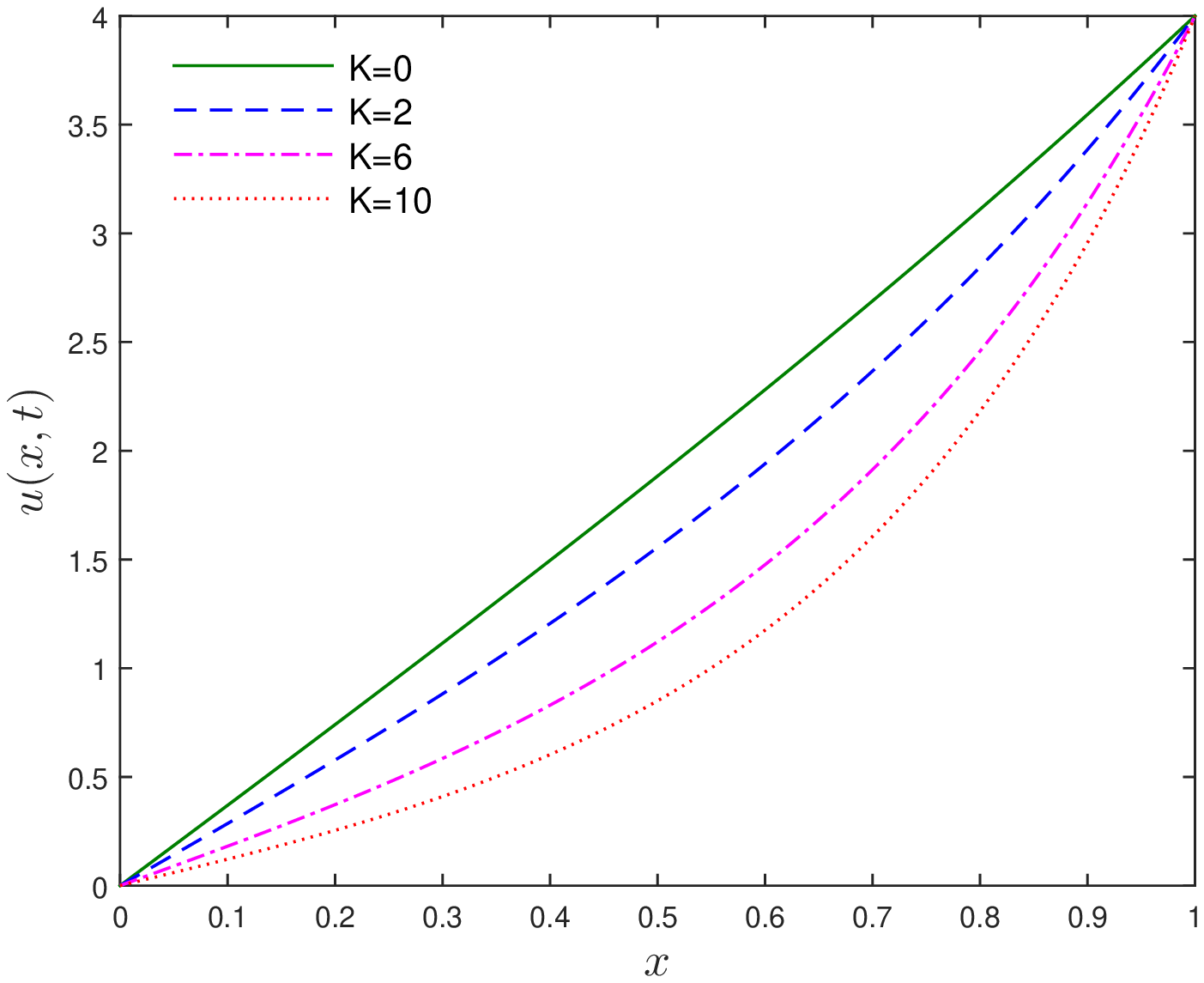}}
\caption{Numerical solution profiles of velocity $u(x,t)$  for different $p$ ($K=2$) and $K$ ($p=1$) with $\lambda=3$, $\theta=4$, $\alpha=0.5$, $\beta=0.6$ at $t=2$.}
\end{figure}
\begin{figure}[htbp]
\centering
\scalebox{0.4}[0.4]{\includegraphics{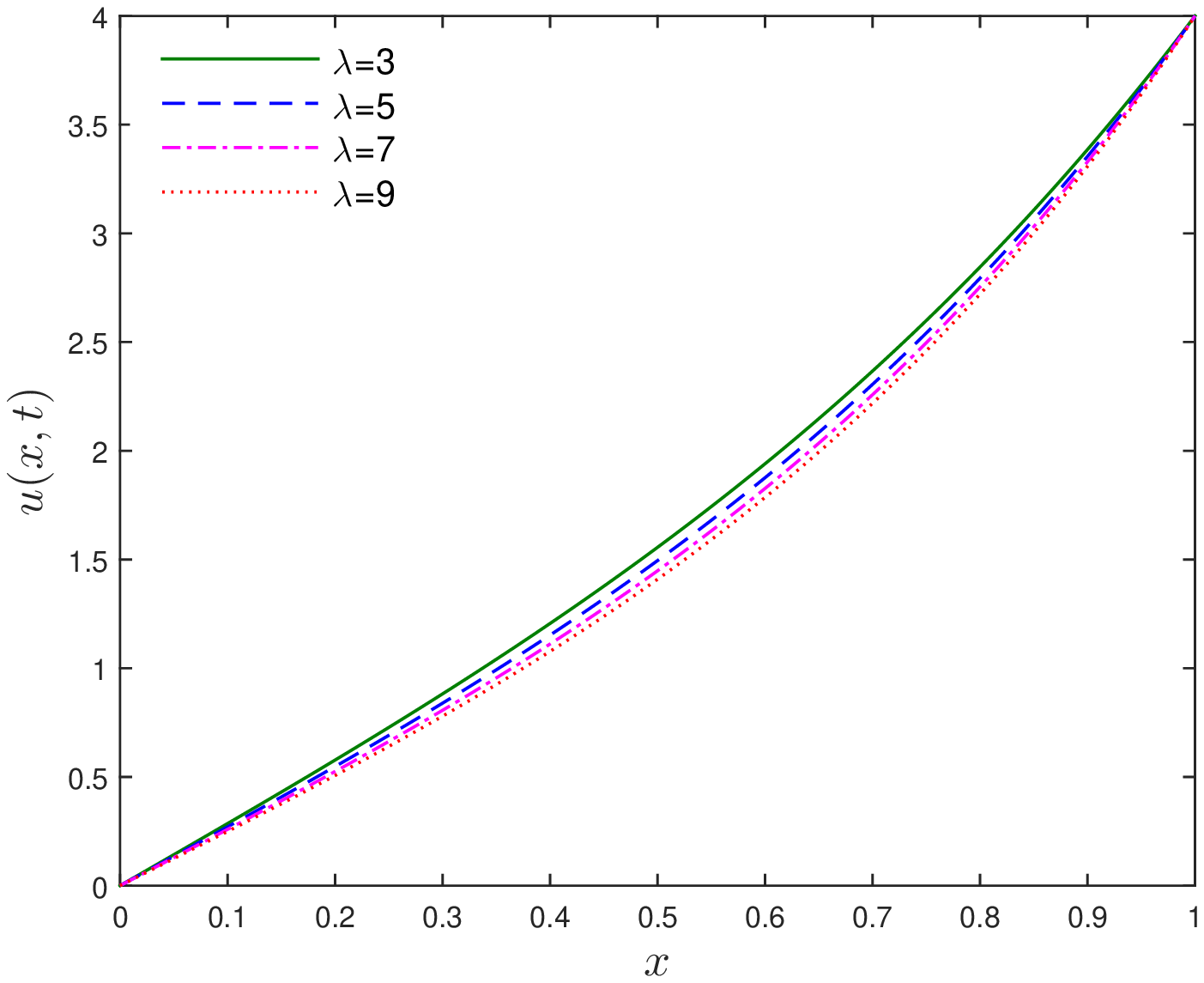}}
\scalebox{0.4}[0.4]{\includegraphics{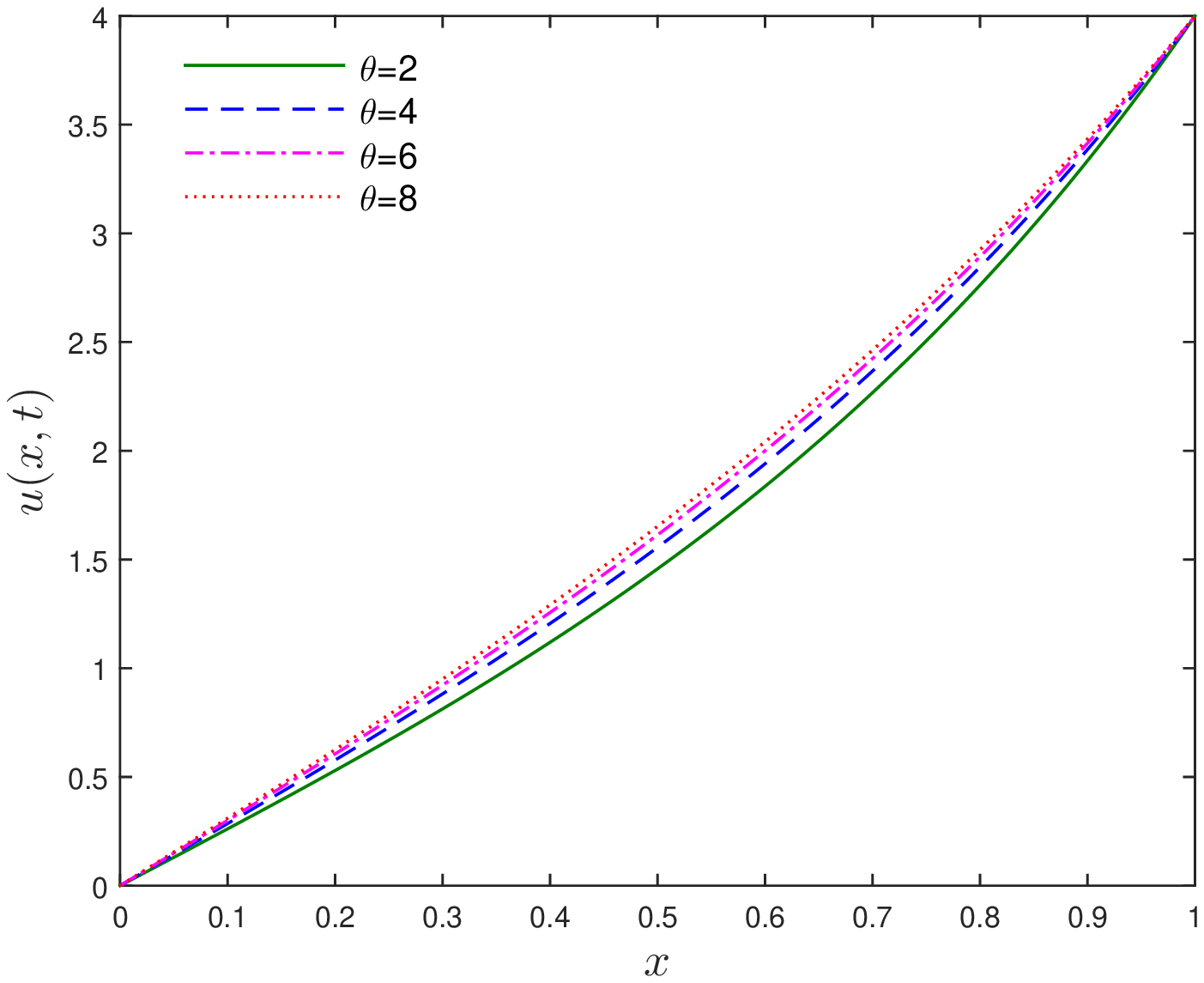}}
\caption{Numerical solution profiles of velocity $u(x,t)$  for different $\lambda$ ($\theta=4$) and $\theta$ ($\lambda=3$) with $p=1$, $\alpha=0.5$, $\beta=0.6$, $K=2$ at $t=2$.}
\end{figure}
\begin{figure}[htbp]
\centering
\scalebox{0.4}[0.4]{\includegraphics{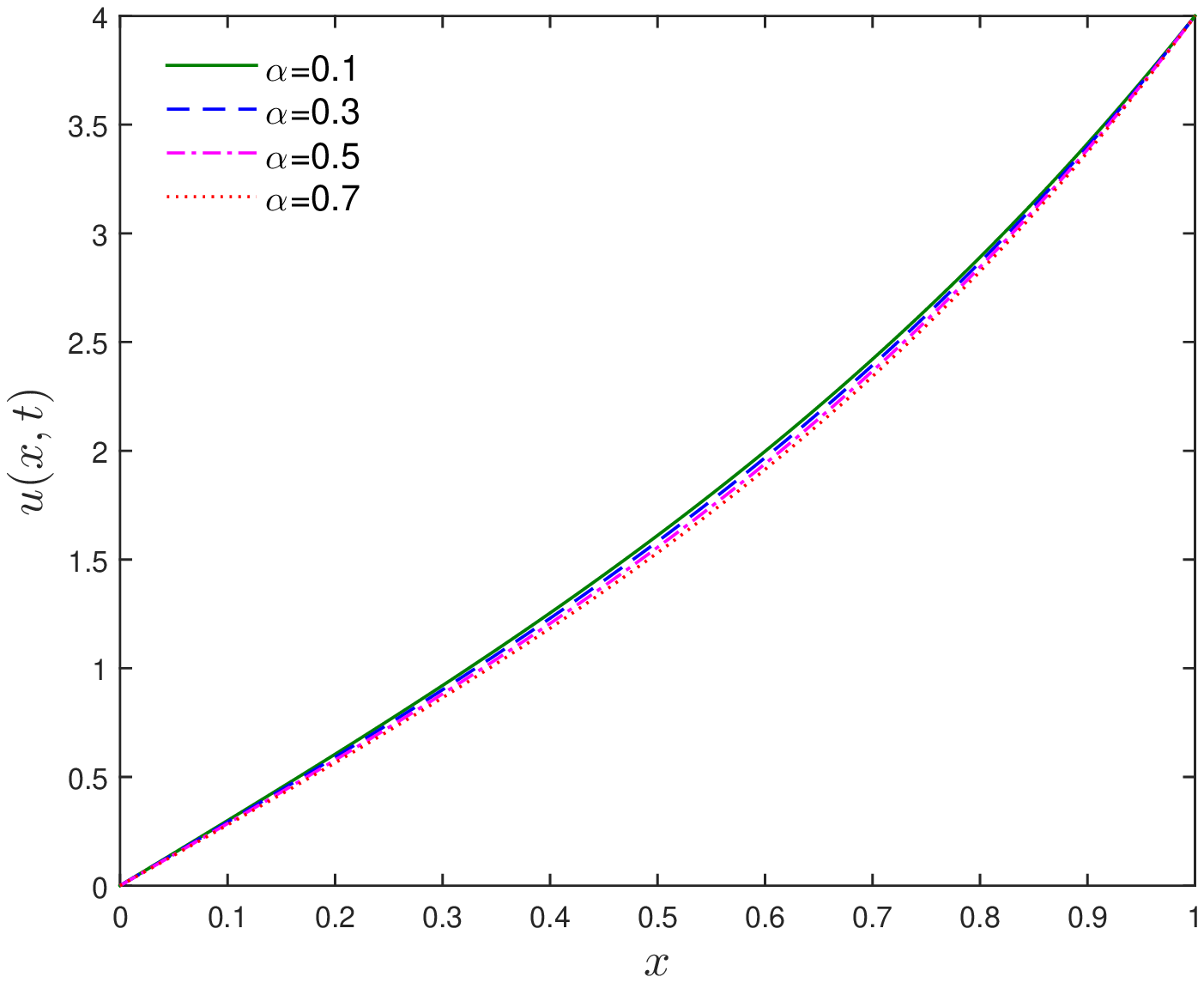}}
\scalebox{0.4}[0.4]{\includegraphics{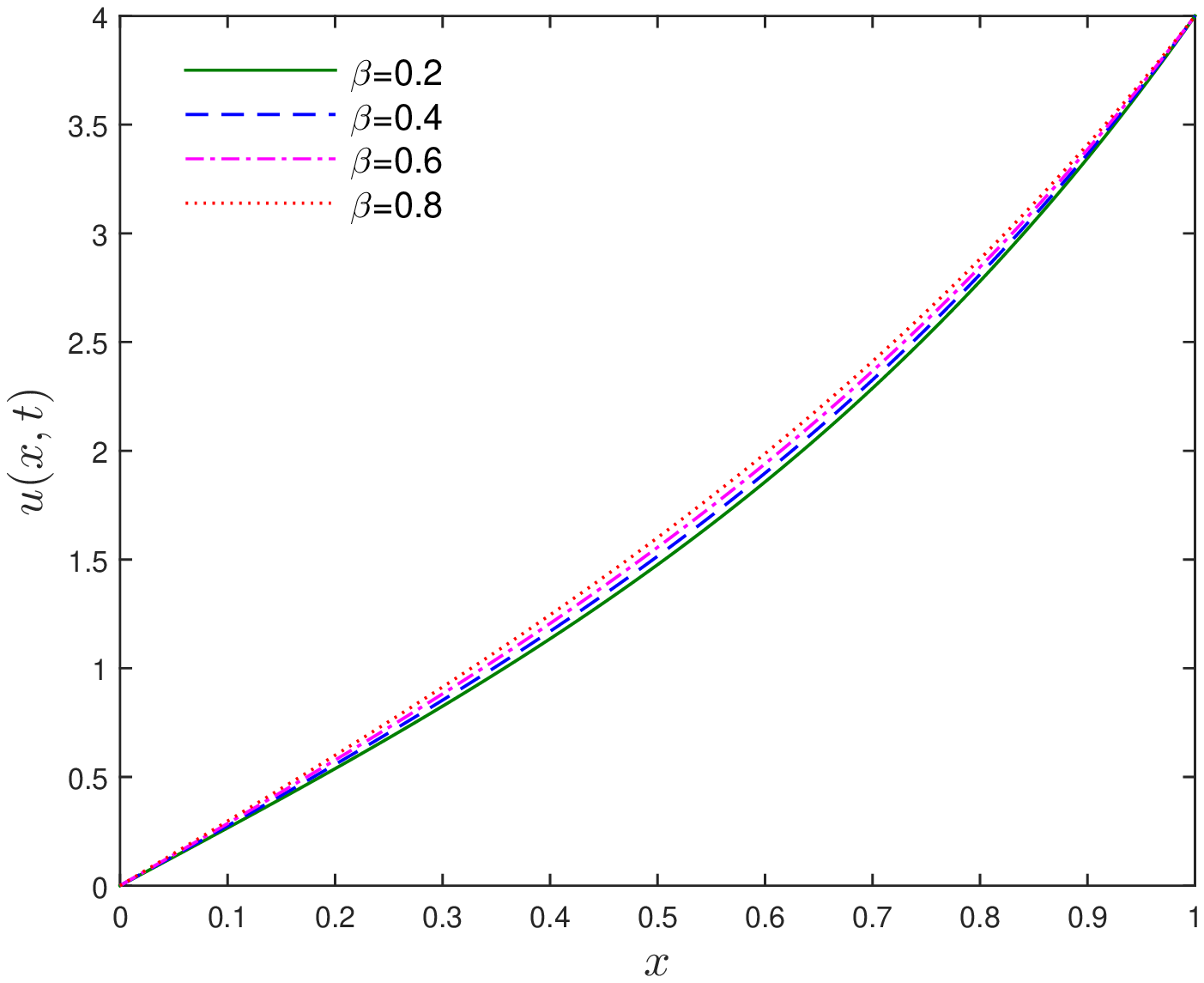}}
\caption{Numerical solution profiles of velocity $u(x,t)$  for different $\alpha$ ($\beta=0.6$) and  $\beta$ ($\alpha=0.5$) with $p=1$, $\lambda=3$, $\theta=4$, $K=2$ at $t=2$.}
\end{figure}
\begin{figure}[htbp]
\centering
\scalebox{0.4}[0.4]{\includegraphics{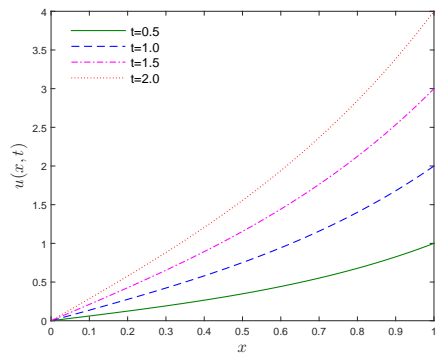}}
\caption{Numerical solution profiles of velocity $u(x,t)$  at different $t$ with $p=1$, $\lambda=3$, $\theta=4$, $\alpha=0.5$, $\beta=0.6$, $K=2$.}
\end{figure}

\section{Conclusions}

In this paper, we proposed the finite difference method to solve the novel multi-term time fractional viscoelastic non-Newtonian fluid model. We not only presented an implicit difference scheme with accuracy of $O(\tau+h^2)$, but also give a high order time scheme with accuracy of $O(\tau^{\min\{3-\gamma_s,2-\alpha_q,2-\beta\}}+h^2)$. In addition, we established the stability and convergence analysis of the finite difference schemes. Two numerical examples were exhibited to verify the effectiveness and reliability of our method. We can conclude that our numerical methods are robust and can be extended to other multi-term time fractional diffusion equations, such as the generalized Oldroyd-B fluid in a rotating system, the generalized Maxwell fluid model, the generalized second grade fluid model and the generalized Burgers' fluid model. In future work, we shall investigate the application of the these methods and techniques to the novel multi-term time fractional viscoelastic non-Newtonian fluid model in high dimensional cases.


\begin{thebibliography}{Lam}


\bibitem{Bagley1} R.L. Bagley, P.J. Torvik, A theoretical basis for the application of fractional calculus to viscoelasticity, Journal of Rheology 27 (1983) 201-210.

\bibitem{Friedrich1} C. Friedrich, Relaxation and retardation functions of the Maxwell model with fractional derivative, Rheologica Acta 30 (1991) 151-158.

\bibitem{Makris} N. Makris, M.C. Constantinou, Fractional-derivative Maxwell model for viscous dampers, Journal of Structural Engineering  117 (1991)  2708-2724.

\bibitem{Hern} A. Hern\'{a}ndez-Jim{e}nez, J. Hern\'{a}ndez-Santiago, A. Macias-Garc\'{i}a, J. S\'{a}nchez-Gonz\'{a}lez, Relaxation modulus in PMMA and PTFE fitting by fractional Maxwell model, Polymer Testing 21 (2002) 325-331.

\bibitem{Hilfer} R. Hilfer, Applications of Fractional Calculus in Physics, World Scientific Press, Singapore, 2000.

\bibitem{Qi07} H.T. Qi, M.Y. Xu, Stokes' first problem for a viscoelastic fluid with the generalized Oldroyd-B model, Acta Mechanica Sinica 23 (2007)  463-469.

\bibitem{Khan09}M. Khan, S. Hyder Ali, H.T. Qi, Some accelerated flows for a generalized Oldroyd-B fluid, Nonlinear Anal. Real World Appl. 10 (2009) 980-991.

\bibitem{a12} Y.Q. Liu, L.C. Zheng, X.X. Zhang, Unsteady MHD Couette flow of a generalized Oldroyd-B fluid with fractional derivative, Comput. Math. Appl. 61 (2011) 443-450.

\bibitem{Zheng12} L.C. Zheng, Y.Q. Liu, X.X. Zhang, Slip effects on MHD flow of a generalized Oldroyd-B fluid with fractional derivative, Nonlinear Anal. Real World Appl. 13 (2012) 513-523.

\bibitem{Khan06} M. Khan, K. Maqbool, T. Hayat,  Influence of Hall current on the flows of a generalized Oldroyd-B fluid in a porous space, Acta Mechanica  184 (2006)  1-13.

\bibitem{Pod99}I. Podlubny, Fractional differential equations, Academic Press, San Diego (1999).

\bibitem{Liu15b}F. Liu , P. Zhuang, Q. Liu, Numerical methods of fractional partial differential equations and applications, Science Press, Beingjing, 2015. ISBN 978-7-03-046335-7.

\bibitem{Gejji}V. Daftardar-Gejji, S. Bhalekar, Boundary value problems for multi-term fractional differential equations, J. Math. Anal. Appl. 345 (2008) 754-765.

\bibitem{Luchko}Y. Luchko, Initial-boundary-value problems for the generalized multi-term time-fractional diffusion equation, J. Math. Anal. Appl. 374 (2011) 538-548.

\bibitem{Jiang1}H. Jiang, F. Liu, I. Turner, K. Burrage, Analytical solutions for the multi-term time-fractional diffusion-wave/diffusion equations in a finite domain, Comput. Math. Appl. 64 (2012) 3377-3388.

\bibitem{Dehghan}M. Dehghan, M. Safarpoor, M. Abbaszadeh, Two high-order numerical algorithms for solving the multi-term time fractional diffusion-wave equations, J. Comput. Appl. Math. 290 (2015) 174-195.

\bibitem{Ye15} H. Ye, F. Liu, V. Anh,  Compact difference scheme for distributed-order time-fractional diffusion-wave equation on bounded domains, J. Comp. Physics 298 (2015) 652-660.

\bibitem{Qin17} S. Qin, F. Liu, I. Turner,  A two-dimensional multi-term time and space fractional Bloch-Torrey model based on bilinear rectangular finite elements. Communications of Nonlinear Science and Numerical Simulation 56 (2018) 270-286.

\bibitem{Jin}B. Jin, R. Lazarov, Y. Liu, Z. Zhou, The Galerkin finite element method for a multi-term time-fractional diffusion equation, J. Comput. Phys. 281 (2015) 825-843.

\bibitem{Zheng}M. Zheng, F. Liu, V. Anh, I. Turner, A high-order spectral method for the multi-term time-fractional diffusion equations, Appl. Math. Model. 40 (2016) 4970-4985.

\bibitem{Ford}N.J. Ford, J.A. Connolly, Systems-based decomposition schemes for the approximate solution of multi-term fractional differential equations, J. Comput. Appl. Math. 229 (2009) 382-391.

\bibitem{Liu13}F. Liu, M. Meerschaert, R. McGough, P. Zhuang, Q. Liu,  Numerical methods for solving the multi-term time-fractional wave-diffusion equation, Fract. Calc. Appl. Anal. 16 (2013) 9-25.

\bibitem{Ming16} C. Ming,  F. Liu, L. Zheng, I. Turner, V. Anh, Analytical solutions of multi-term time fractional differential equations and application to unsteady flows of generalized viscoelastic fluid, Comput. Math. Appl. 72 (2016) 2084-2097.

\bibitem{Baz} E. Bazhlekova, I. Bazhlekov, Viscoelastic flows with fractional derivative models: computational approach by convolutional calculus of Dimovski, Fract. Calc. Appl. Anal. 17 (2014)  954-976.

\bibitem{Feng17} L. Feng, F. Liu, I. Turner, P. Zhuang, Numerical methods and analysis for simulating the flow of a generalized Oldroyd-B fluid between two infinite parallel rigid plates, International Journal of Heat and Mass Transfer  115 (2017) 1309-1320.

\bibitem{Li11} C.P. Li, Z.G. Zhao, Y.Q. Chen, Numerical approximation of nonlinear fractional differential equations with subdiffusion and superdiffusion, Comput. Math. Appl. 62 (2011) 855-875.

\bibitem{Sun06} Z. Sun, X. Wu,  A fully discrete difference scheme for a diffusion-wave system, Appl. Numer. Math. 56 (2006) 193-209.

\bibitem{Liu07} F. Liu, P. Zhuang, V. Anh, I. Turner, K. Burrage, Stability and convergence of the difference methods for the space-time fractional advection-diffusion equation, Appl. Math. Comput. 191 (2007) 12-20.

\bibitem{Marcos} J. C. Lopez-Marcos,  A difference scheme for a nonlinear partial integrodifferential equation, SIAM Journal on Numerical Analysis 27 (1990) 20-31.

\bibitem{Qua} A. Quarteroni, R. Sacco, F. Saleri, Numerical Mathematics, 2nd ed., Springer (2007).

\bibitem{Bo} A. B\"ottcher, B. Silbermann, Analysis of Toeplitz operators (Second Edition), Springer-Verlag, Berlin Heidelberg, 2005.

\bibitem{Chen12} J. Chen, F. Liu, V. Anh, S. Shen, Q. Liu, C. Liao, The analytical solution and numerical solution of the fractional diffusion-wave equation with damping, Appl. Math. Comput. 219 (2012) 1737-1748.

\end{thebibliography}
\end{document}